\renewenvironment{proof}[1][\proofname]{\par
  \pushQED{\qed}%
  \normalfont \topsep-5\p@\@plus6\p@\relax
  \trivlist
  \item[\hskip\labelsep
    #1\@addpunct{.}]\ignorespaces
}{%
  \popQED\endtrivlist\@endpefalse
}
\titlespacing\section{0pt}{-3pt plus 2pt minus 1pt}{-7pt plus 2pt minus 1pt}
\titlespacing\subsection{0pt}{-3pt plus 2pt minus 1pt}{-7pt plus 2pt minus 1pt}
\titlespacing\subsubsection{0pt}{10pt plus 4pt minus 2pt}{1pt plus 2pt minus 2pt}
\numberwithin{equation}{section}
\newtheoremstyle{reduced_space}{}{-0.5\baselineskip}{}{}{\bfseries}{}{.5em}{}
\theoremstyle{reduced_space}
\newtheorem{thm}{Theorem}[section]
\newtheorem{prp}[thm]{Proposition}
\newtheorem{lmm}[thm]{Lemma}
\newtheorem{crl}[thm]{Corollary}
\theoremstyle{definition}
\newtheorem{dfn}[thm]{Definition}
\theoremstyle{remark}
\newtheorem{rmk}[thm]{Remark}
\def\BE#1{\begin{equation}\label{#1}}
\def\EE{\end{equation}}
\def\eref#1{(\ref{#1})}
\def\BEnum#1{\begin{enumerate}[label=#1,leftmargin=*,topsep=-10pt,itemsep=-3pt]}
\def\EEnum{\end{enumerate}}
\def\ov#1{\overline{#1}}
\def\sf#1{\textsf{#1}}
\def\wt#1{\widetilde{#1}}
\def\tn#1{\textnormal{#1}} 
\def\lr#1{\langle{#1}\rangle}
\def\blr#1{\big\langle{#1}\big\rangle}
\def\bllrr#1{\big\langle\!\!\big\langle{#1}\big\rangle\!\!\big\rangle}
\def\unbr#1#2{\underset{#2}{\underbrace{#1}}}
\def\lra{\longrightarrow}
\def\Lra{\Longrightarrow}
\def\xlra#1{\xrightarrow{{#1}}}
\def\C{\mathbb C}
\def\cC{\mathcal C}
\def\D{\mathbb D}
\def\cD{\mathcal D}
\def\cJ{\mathcal J}
\def\M{\mathfrak M}
\def\cM{\mathcal M}
\def\cN{\mathcal N}
\def\cP{\mathcal P}
\def\R{\mathbb R}
\def\Q{\mathbb Q}
\def\bS{\mathbb S}
\def\Z{\mathbb Z}
\def\al{\alpha}
\def\be{\beta}
\def\ep{\epsilon}
\def\ga{\gamma}
\def\io{\iota}
\def\la{\lambda}
\def\si{\sigma}
\def\om{\omega}
\def\De{\Delta}
\def\Ga{\Gamma}
\def\fb{\mathfrak b}
\def\fbb{\mathfrak{bb}}
\def\fc{\mathfrak c}
\def\ff{\mathfrak f}
\def\fo{\mathfrak o}
\def\fp{\mathfrak p}
\def\fs{\mathfrak s}
\def\u{\mathbf u}
\def\Br{\tn{Br}}
\def\codim{\tn{codim}}
\def\nd{\tn{d}}
\def\dim{\tn{dim}}
\def\dom{\tn{dom}}
\def\ev{\tn{ev}}
\def\evb{\tn{evb}}
\def\evi{\tn{evi}}
\def\FPt{\tn{FPt}}
\def\FPC{\tn{FPC}}
\def\id{\tn{id}}
\def\Id{\tn{Id}}
\def\Im{\tn{Im}}
\def\lk{\tn{lk}}
\def\nod{\tn{nd}}
\def\PD{\tn{PD}}
\def\PC{\tn{PC}}
\def\pt{\tn{pt}}
\def\sgn{\tn{sgn}}
\def\fiber{\times_\tn{fb}}
\def\uo{\tn{uo}}
\def\MDC{\tn{MD}}
\def\DMDC{\tn{DMD}}
\def\SDC{\tn{SD}}
\def\ST{\tn{ST}}
\def\i{\infty}
\def\eset{\emptyset}
\def\prt{\partial}
\def\dbar{\ov\partial}
\def\st{\bigstar}
\def\os{\mathfrak{os}}
\def\bu{\bullet}
\def\v{\vee}
\begin{document}

\title{Solomon-Tukachinsky's vs.~Welschinger's\\
Open Gromov-Witten Invariants of Symplectic Sixfolds}
\author{Xujia Chen\thanks{Supported by NSF grant DMS 1901979}}
\date{\today}

\maketitle

\begin{abstract}
\noindent
Our previous paper describes a geometric translation of 
the construction of open Gromov-Witten invariants by J.~Solomon and S.~Tukachinsky
from a perspective of $A_{\i}$-algebras of differential forms.
We now use this geometric perspective to show that these invariants reduce to 
Welschinger's open Gromov-Witten invariants in dimension~6,
inline with their and G.~Tian's expectations.
As an immediate corollary, we obtain a translation of 
Solomon-Tukachinsky's open WDVV equations into relations
for Welschinger's invariants.
\end{abstract}

\tableofcontents
\setlength{\parskip}{\baselineskip}

\section{Introduction}
\label{intro_sec}

Suppose $(X,\om)$ is a compact symplectic sixfold, 
$Y\!\subset\!X$ is a compact Lagrangian submanifold,
and $\os\!\equiv\!(\fo,\fs)$ is an OSpin-structure on~$Y$,
i.e.~a pair consisting of an orientation~$\fo$ on~$Y$ and a Spin-structure~$\fs$ on
the oriented manifold~$(Y,\fo)$. 
If 
\BE{strongpos_e2}\om(\be)>0,~~\mu_Y^{\om}(\be)\ge0 \qquad\Lra\qquad
\mu_Y^{\om}(\be)>0\EE
for every $\be\!\in\!H_2(X,Y;\Z)$ representable by a map from~$(\D^2,S^1)$,
every non-constant $J$-holomorphic map from $(\D^2,S^1)$ to~$(X,Y)$ 
has a positive Maslov index for a generic $\om$-compatible almost complex structure~$J$;
the same happens along a generic path of almost complex structures.

Let $R$ be a commutative ring with unity~1.
If the homomorphism
\BE{iohomdfn_e}\io_{Y*}\!:H_1(Y;R)\lra H_1(X;R)\EE
induced by the inclusion $\io_Y\!:Y\!\lra\!X$ is injective,
the boundaries of maps from $(\D^2,S^1)$ to~$(X,Y)$
are homologically trivial in~$Y$ and thus have well-defined linking numbers with values in~$R$.
Under this injectivity assumption and the positivity condition~\eref{strongpos_e2}, 
Welschinger~\cite{Wel13} defines \sf{open Gromov-Witten invariants}
\BE{Welinvdfn_e}
\bllrr{\cdot,\ldots,\cdot}_{\be,k}^{\om,\os}\!: 
\bigoplus_{l=0}^{\i} H^{2*}(X,Y;R)^{\oplus l}\lra R, 
\quad \be\!\in\!H_2(X,Y;\Z)\!-\!\{0\},~k\!\in\!\Z^+,\EE
enumerating $J$-holomorphic multi-disks of total degree~$\be$ 
weighted by $R$-valued linking numbers of their boundaries;
see Section~4.1 in~\cite{Wel13} and~\eref{Welinvdfn_e0}.
In ``real settings" (when $Y$ is a topological component of the fixed locus of anti-symplectic
involution on~$X$), these invariants encode the real Gromov-Witten invariants 
defined in~\cite{Wel6,Wel6b,Jake}; see Remark~\ref{WelReal_rmk}.
A special case of the setting of~\cite{Wel13} is when $Y$ is an $R$-homology~$S^3$.

Based on $A_{\i}$-algebra considerations, 
K.~Fukaya~\cite{Fuk11} defines a count \hbox{$\lr{}_{\be,0}^{\om,\os}\!\in\!\Q$} 
of $J$-holomorphic degree~$\be$ disks in~$X$ with boundary in~$Y$ under the assumption
that $(X,\om)$ is a Calabi-Yau threefold and the \sf{Maslov index} 
\BE{Maslovdfn_e}\mu_{\om}\!:H_2(X,Y;\Z)\lra\Z\EE
of~$(X,Y)$ vanishes;
this count may in general depend on the $\om$-compatible almost complex structure~$J$ on~$X$.
Motivated by~\cite{Fuk11}, 
J.~Solomon and S.~Tukachinsky~\cite{JS2} use a bounding chain
of differential forms to define counts 
\BE{JSinvdfn_e}\lr{\cdot,\ldots,\cdot}_{\be,k}^{\om,\os}\!: 
\bigoplus_{l=0}^{\i}H^{2*}(X,Y;\R)^{\oplus l}\lra\R, \quad 
\be\!\in\!H_2(X,Y;\Z),~k\!\in\!\Z^{\ge0},\EE
of $J$-holomorphic disks in symplectic manifolds~$X$ of arbitrary dimension~$2n$
and show that bounding chains that are equivalent in a suitable sense define
the same counts.
They also prove that bounding chains exist and any two relevant bounding chains
are equivalent if $n$ is odd and $Y$ is an $\R$-homology sphere. 
The resulting \sf{open Gromov-Witten invariants}~\eref{JSinvdfn_e} of~$(X,Y)$
 depend on the choice of a (relative) OSpin-structure~$\os$ on~$Y$, 
but are independent of all other auxiliary choices (such as~$J$).

Well before~\cite{JS2}, G.~Tian expressed a belief that 
the construction of~\cite{Wel13} is a geometric realization 
of the algebraic considerations behind the construction of~\cite{Fuk11}.
The same sentiment is expressed in \cite[Sec~1.2.7]{JS2}.
The present paper uses the geometric interpretation of the construction of~\cite{JS2}  
described in~\cite{JakeSaraGeom}, which is applicable over any commutative ring~$R$
with unity under the positivity condition~\eref{strongpos_e2} in the case of symplectic sixfolds,  
to confirm G.~Tian's and Solomon-Tukachinsky's
expectations; see Theorem~\ref{WelOpen_thm} below.
Proposition~\ref{RDivRel_prp}, a kind of open divisor relation
which trades real codimension~1 bordered insertions at boundary marked points
for linking numbers of their boundaries with the boundaries of the disks,
provides a transition from bounding chains to linking numbers.
As an immediate consequence of Theorem~\ref{WelOpen_thm}, 
the basic structural properties and 
the WDVV-type relations for open Gromov-Witten invariants
obtained in~\cite{JS3} yield 
similar properties and relations for Welschinger's
invariants~\eref{Welinvdfn_e}; 
see Corollaries~\ref{WelOpenPrp_crl} and~\ref{WelOpenWDVV_crl}.

The positivity condition~\eref{strongpos_e2} ensures that virtual techniques are not 
necessary for the purposes of this paper.
However, the reasoning extends to general symplectic sixfolds satisfying 
the injectivity condition~\eref{strongpos_e2} via the setup of Appendix~A 
in~\cite{JakeSaraGeom} if $R\!\supset\!\Q$.
This setup is compatible with standard virtual class approaches,
such as in~\cite{LT,FO, HWZ,Pardon}.
As we only need evaluation maps from the $(J,\nu)$-spaces to be pseudocycles,
a full virtual cycle construction and gluing across all strata of 
the $(J,\nu)$-spaces are not necessary.

The author would like to thank Penka Georgieva for hosting her at 
the Institut de Math\'ematiques de Jussieu in March~2019 and
for the enlightening discussions that inspired the present paper.
She would also like to thank Aleksey Zinger for suggesting the topic and help with the exposition.

\subsection{Comparison theorem}
\label{compthm_subs}

Let $(X,\om)$ be a compact symplectic manifold of dimension~$2n$,  
$Y\!\subset\!X$ be a compact Lagrangian submanifold,
and $\os\!\equiv\!(\fo,\fs)$ be a relative OSpin-structure on~$Y$. 
Let $\al\!\equiv\!(\be,K,L)$ be a tuple consisting of $\be\!\in\!H_2(X,Y;\Z)$, 
a generic finite subset~$K$ of~$Y$, and
a generic set~$L$ of pseudocycles $\Ga_1,\ldots,\Ga_l$ to $X\!-\!Y$
representing Poincare duals of some cohomology classes $\ga_1,\ldots,\ga_l$ on~$(X,Y)$.
For a generic $\om$-compatible almost complex structure~$J$ on~$X$,
a \sf{bounding chain} $(\fb_{\al'})_{\al'\in\cC_{\om;\al}(Y)}$ on~$(\al,J)$
in the geometric perspective of~\cite{JakeSaraGeom}
is a tuple of bordered pseudocycles with certain properties specified
in the $\dim\,X\!=\!6$ case by Definition~\ref{bndch_dfn} in the present paper.
Such a tuple determines a pseudocycle~$\fbb_{\al}$
into~$Y$; see~\eref{fbbdfn_e}.
It has a well-defined degree, and we set
\BE{JSinvdfn_e2}
\blr{\ga_1,\ldots,\ga_l}_{\be,|K|+1}^{\om,\os}\equiv
\blr{L}_{\be;K}^{\om,\os}\equiv\deg\fbb_{\al}.\EE
This degree may depend on the choices of $K$, $L$, $J$, 
and $(\fb_{\al'})_{\al'\in\cC_{\om;\al}(Y)}$.
Bounding chains differing by a pseudo-isotopy of \cite[Dfn.~2.2]{JakeSaraGeom}
determine the same degrees~\eref{JSinvdfn_e2}; see \cite[Sec.~2.2]{JakeSaraGeom}.
This guarantees that the numbers in~\eref{JSinvdfn_e2} depend only on~$\om$, $\os$, $\be$, $|K|$,
and $\ga_1,\ldots,\ga_l$ if $n$ is odd and $Y$ is an $R$-homology sphere.
However, the injectivity of~\eref{iohomdfn_e} does {\it not} guarantee the existence 
of a pseudo-isotopy between a pair of bounding chains associated even
with the same $K$, $L$, and~$J$.
Nevertheless, we establish the following.

\begin{thm}\label{WelOpen_thm}
Suppose $R$ is a commutative ring with unity, $(X,\om)$ is a compact symplectic sixfold,
$Y\!\subset\!X$ is a compact Lagrangian submanifold 
so that the positivity condition~\eref{strongpos_e2} holds and 
the homomorphism~\eref{iohomdfn_e} is injective, and
$\os$ is a relative OSpin-structure on~$Y$.
Let $\be\!\in\!H_2(X,Y;\Z)$, $K$ be a generic finite subset of~$Y$,
\hbox{$L\!\equiv\!\{\Ga_1,\ldots,\Ga_l\}$} be 
a generic set of even-dimensional pseudocycles to~$X\!-\!Y$,
$\al\!\equiv\!(\be,K,L)$, and 
 $J$ be a generic $\om$-compatible almost complex structure on~$X$.
\BEnum{(W\arabic*)}

\item\label{WelBC_it} 
There exists a bounding chain $(\fb_{\al'})_{\al'\in\cC_{\om;\al}(Y)}$
on~$(\al,J)$.

\item\label{Welgen_it} If  $\ga_1,\ldots,\ga_l\!\in\!H^{2*}(X,Y;R)$
are the Poincare duals of $\Ga_1,\ldots,\Ga_l$, then
$$\bllrr{\ga_1,\ldots,\ga_l}_{\be,|K|+1}^{\om,\os}=
(-1)^{|K|+1}\blr{L}_{\be;K}^{\om,\os}$$
for any bounding chain $(\fb_{\al'})_{\al'\in\cC_{\om;\al}(Y)}$
on~$(\al,J)$.

\EEnum
\end{thm}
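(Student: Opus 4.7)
The plan is to prove the two parts of Theorem~\ref{WelOpen_thm} in tandem: build a bounding chain inductively on the energy $\om(\be')$ and the ``depth'' of the index $\al'\!\in\!\cC_{\om;\al}(Y)$, and in parallel compute $\deg\fbb_{\al}$ by iteratively applying the open divisor relation of Proposition~\ref{RDivRel_prp} to reduce bordered boundary constraints to linking numbers. The positivity hypothesis~\eref{strongpos_e2} guarantees that for a generic $J$ only finitely many Maslov-positive disk classes~$\be'$ contribute below any given energy, and that the boundary strata of the relevant moduli spaces consist of nodal configurations of strictly smaller energy; this provides a well-founded induction.

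For part~\ref{WelBC_it}, I would proceed by induction on $\om(\be')$ (and secondarily on the number of boundary marked points) over $\al'\!\in\!\cC_{\om;\al}(Y)$. At each stage the defining boundary condition from Definition~\ref{bndch_dfn} determines a closed pseudocycle of dimension $\dim Y\!-\!1\!=\!2$ into~$Y$, assembled from evaluation maps of disks together with the previously constructed $\fb_{\al''}$ for $\al''$ of smaller complexity. Since this cycle bounds in~$X$ (by considering the total chain of disks with the corresponding $\fb_{\al''}$ attached at nodes), the injectivity of~\eref{iohomdfn_e} forces it to also bound in~$Y$, and a generic bordered pseudocycle $\fb_{\al'}$ into~$Y$ realizing this nullhomology can be chosen. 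The positivity condition~\eref{strongpos_e2} is what prevents the induction from collapsing, since it rules out bubbling of Maslov-$0$ disks into the boundary strata.

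For part~\ref{Welgen_it}, I would compute $\deg\fbb_{\al}$ by repeatedly invoking Proposition~\ref{RDivRel_prp} to trade each of the $|K|$ boundary marked-point constraints for a linking number of a disk boundary in~$Y$. After exhausting all the constraints in~$K$, what remains is a weighted count of $J$-holomorphic multi-disks representing~$\be$ with the interior insertions $\Ga_1,\ldots,\Ga_l$ and a ``root'' boundary marked point, weighted by products of linking numbers of the boundary circles with the points in~$K$; this is exactly the defining expression of Welschinger's invariant in~\eref{Welinvdfn_e}. The overall sign $(-1)^{|K|+1}$ is tracked by combining the sign contributed at each application of Proposition~\ref{RDivRel_prp} with the orientation convention on $\fbb_{\al}$ induced by the OSpin-structure~$\os$.

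The main obstacle is the inductive construction in part~\ref{WelBC_it}: I expect that while the injectivity of~\eref{iohomdfn_e} readily promotes nullhomology in~$X$ to nullhomology in~$Y$, verifying that the assembled boundary cycle of each $\fb_{\al'}$ is indeed closed, has the correct dimension, and depends on the earlier $\fb_{\al''}$ in a manner compatible with Definition~\ref{bndch_dfn} requires a careful book-keeping of all nodal boundary strata and the orientation conventions inherited from~$\os$. A secondary but nontrivial obstacle is matching signs throughout the degree computation; unlike in the pseudo-isotopy setting, here I must perform the comparison on the nose for the particular bounding chain I constructed, which means every convention in the transition from \cite{JakeSaraGeom} to Welschinger's enumerative formula must be pinned down precisely.
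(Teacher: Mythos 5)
Your broad outline — induct on the partial order of~$\cC_{\om;\al}(Y)$ to build the bounding chain, use the injectivity of~\eref{iohomdfn_e} to promote nullhomology in~$X$ to nullhomology in~$Y$, and use Proposition~\ref{RDivRel_prp} to convert boundary insertions into linking numbers — is the right overall frame, and in that sense you are pointing at the same machinery the paper uses. However, there is a substantive misconception in how the divisor relation is supposed to produce Welschinger's count, and this is where the real content of the argument lives.

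First, you describe the goal as trading ``each of the $|K|$ boundary marked-point constraints for a linking number of a disk boundary in~$Y$'' and landing on a count of multi-disks ``weighted by products of linking numbers of the boundary circles with the points in~$K$.'' This is not Welschinger's invariant, and it cannot be: a linking number pairs two disjoint one-cycles in the three-manifold~$Y$, and a point in~$K$ has the wrong dimension to link with anything. In Welschinger's definition (recall~\eref{Welinvdfn_e0} and the definition of~$\lk(\u)$ as a sum over spanning trees), the $|K|$ point constraints remain honest point constraints on the multi-disk, and the linking numbers appearing are between \emph{pairs of disk boundary circles} $\prt\u_r$, $\prt\u_s$. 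What the open divisor relation trades away is the codimension-one bounding chain insertions $\fb_{\al_i(\eta)}$ — precisely the $i\!\in\!K_\bu^*(\eta)$, i.e., the non-point pieces of the bounding chain — producing the factors $\lk(\prt\u_\bu,\prt\fb_{\al_i(\eta)})$. The point constraints pass through Proposition~\ref{RDivRel_prp} untouched.

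Second, and related: the heart of the paper's proof is the structural identity~\eref{fbbalWel_e}, namely that for every $\al'$ of dimension~$0$,
\[
\fbb_{\al'}=(-1)^{|K(\al')|}\!\!\!\!\bigsqcup_{\u\in\MDC(\al')}\!\!\!\!\sgn(\u)\,\lk(\u)\,\prt\u.
\]
This is proved \emph{jointly} with the existence of the bounding chain by the same induction: at each step, applying Proposition~\ref{RDivRel_prp} (with $K'\!=\!K_\bu^*(\eta)$) and the inductive hypothesis for the $\fb_{\al_i(\eta)}$ turns the recursive definition of $\fbb_{\al'}$ into the above sum, where each pair of spanning-tree weights $\lk(\u_i;T_i)$ and $\lk(\prt\u_\bu,\prt\u_i)$ assembles (via the bijection~\eref{DMDCmap_e} between $\DMDC$ and $\ov\DMDC$) into $\lk(\u;T)$. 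This identity is what makes the existence step clean — each $\prt\u$ bounds its own disk, so $\fbb_{\al'}$ is trivially nullhomologous in~$X$ and hence in~$Y$ by~\eref{iohomdfn_e} — and it is also what makes the degree computation for~\ref{Welgen_it} a one-line cap by $\fb_{\fp_1}$ rather than a fresh iterated application of the divisor relation. Your proposal treats~\ref{WelBC_it} and~\ref{Welgen_it} as two separate computations and never states this identity; without it, neither the boundedness argument nor the combinatorics of the spanning-tree sum falls into place.
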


\vspace{.1in}

For each $\al'\!\in\!\cC_{\om;\al}(Y)$ satisfying the dimension condition 
in Definition~\ref{bndch_dfn}\ref{BCprt_it},
the right-hand side of~\eref{BCprt_e} consists of the boundaries of some maps 
from $(\D^2,S^1)$ to~$(X,Y)$; see Proposition~\ref{WelOpen_prp}.
By the injectivity of~\eref{iohomdfn_e}, we can thus choose a bordered pseudocycle~$\fb_{\al'}$ 
to~$Y$ satisfying~\eref{BCprt_e}.
This implies that a bounding chain $(\fb_{\al'})_{\al'\in\cC_{\om;\al}(Y)}$
on $(\al,J)$ can be constructed by induction 
on the partially ordered set $\cC_{\om;\al}(Y)$ and 
establishes Theorem~\ref{WelOpen_thm}\ref{WelBC_it}.

A key ingredient in the proof of Proposition~\ref{WelOpen_prp}
is the Open Divisor Relation of Proposition~\ref{RDivRel_prp}, 
which replaces real codimension~1 bordered insertions at boundary marked points
with linking numbers.
This relation is also combined with Proposition~\ref{WelOpen_prp} to obtain 
the identification of the disk counts~\eref{JSinvdfn_e2}
with Welschinger's invariants~\eref{Welinvdfn_e} stated in Theorem~\ref{WelOpen_thm}\ref{Welgen_it}.
This identification in turn implies that the numbers~\eref{JSinvdfn_e2} depend only
on~$\om$, $\os$, $\be$, $|K|$, and $\ga_1,\ldots,\ga_l$.

We denote by 
\BE{qYdfn_e}q_Y\!:H_2(X;\Z)\lra H_2(X,Y;\Z)\EE 
the natural homomorphism.
A bounding chain $(\fb_{\al})_{\al\in\cC_{\om;\al}(Y)}$ as in Definition~\ref{bndch_dfn} 
can also be used to define a count of $J$-holomorphic degree~$\be$ disks
through $|K|$ (rather than $|K|\!+\!1$) points in~$Y$ if 
\BE{nosphbubb_e} k\equiv|K|\neq0 \qquad\hbox{or}\qquad 
\be\not\in\Im\big(q_Y\!:H_2(X;\Z)\!\lra\!H_2(X,Y;\Z)\!\big);\EE
see~\eref{JSinvdfn_e2b}.
The definition of the invariants~\eref{Welinvdfn_e} in~\cite{Wel13} immediately 
extends to counts of multi-disks with $k\!=\!0$ points in~$Y$ if
$\be$ satisfies the second condition in~\eref{nosphbubb_e}.
The proof of Theorem~\ref{WelOpen_thm} can be slightly modified to cover this~case. 
It can also be readily extended to the open invariants with insertions from~$H_2(Y;R)$, 
which are defined in~\cite{Wel13}.

It is immediate from~\eref{Welinvdfn_e0} that Welschinger's open invariants
are symmetric linear functionals that satisfy an open divisor relation:
$$\bllrr{\ga,\ga_1,\ldots,\ga_l}_{\be,k}^{\om,\os}
=\lr{\ga,\be}\bllrr{\ga_1,\ldots,\ga_l}_{\be,k}^{\om,\os}
\qquad\forall\,\ga\!\in\!H^2(X,Y;R).$$
Combining Theorem~\ref{WelOpen_thm} above with the last three statements of Theorem~2.9
in~\cite{JakeSaraGeom}, 
which in turn are analogues of Proposition~2.1 in~\cite{RealWDVV3}
and Corollary~1.5 in~\cite{JS3},
we obtain below additional properties of Welschinger's open invariants.

Suppose $Y$ is connected.
The kernel of the homomorphism
$$H_2(X\!-\!Y;R)\lra H_2(X;R)$$
is then generated by the homology class~$[S(\cN_yY)]_{X-Y}$ of
a unit sphere $S(\cN_yY)$ in the fiber of~$\cN Y$ over any $y\!\in\!Y$.
We orient~$S(\cN_yY)$ as in \cite[Sec~2.5]{RealWDVV3} and
denote the image of~$[S(\cN_yY)]_{X-Y}$  under the Lefschetz Duality isomorphism
$$\PD_{X,Y}\!: H_2\big(X\!-\!Y;R\big)\stackrel{\approx}{\lra} H^4\big(X,Y;R\big)$$
by~$\eta_{X,Y}^{\circ}$.
For $B\!\in\!H_2(X;\Z)$, let
$$\lr{\cdot,\ldots,\cdot}_B^{\om}\!: \bigoplus_{l=0}^{\i}H^*(X;R)^{\oplus l}\lra R$$ 
be the standard GW-invariants of~$(X,\om)$.
We denote by $[Y]_X$ the homology class on~$X$ determined by~$Y$.

\begin{crl}\label{WelOpenPrp_crl}
Let $(X,\om,Y)$, $\os$, $\be$, $k$, and $\ga_1,\ldots,\ga_l$ be
as in Theorem~\ref{WelOpen_thm}.
If the pair $(k,\be)$ satisfies~\eref{nosphbubb_e} and $Y$ is connected,
Welschinger's open invariants~\eref{Welinvdfn_e} satisfy
the following properties.
\BEnum{(WGW\arabic*)}

\item\label{sphere_it} 
$\displaystyle\bllrr{\ga_{X,Y}^{\circ},\ga_1,\ldots,\ga_l}_{\!\be,k}^{\!\om,\os}
=-\bllrr{\ga_1,\ldots,\ga_l}_{\!\be,k+1}^{\!\om,\os}$.

\item\label{lagl_it1} 
If $k\!=\!1$ and $\ga_0\!\in\!H^3(X;R)$,
$${}\hspace{-.8in}
\lr{\ga_0,[Y]_X}\bllrr{\ga_1,\ldots,\ga_l}_{\!\be,k}^{\!\om,\os}
=-\!\!\!\sum_{B\in q_Y^{-1}(\be)}\!\!\!\!\!\!\!(-1)^{\lr{w_2(\os),B}}
\blr{\PD_X\big([Y]_X\big),\ga_0,\ga_1|_X,\ldots,\ga_l|_X}_{\!B}^{\!\om}\,.$$ 

\item\label{lagl_it2} 
If $[Y]_X\!\neq\!0$ and $k\!\ge\!2$,
then $\bllrr{\ga_1,\ldots,\ga_l}_{\!\be,k}^{\!\om,\os}\!=\!0$.

\EEnum
\end{crl}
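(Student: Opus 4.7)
The plan is to deduce~\ref{sphere_it}--\ref{lagl_it2} from the corresponding statements for the Solomon-Tukachinsky invariants $\lr{\cdot}^{\om,\os}_{\be,k}$ by applying Theorem~\ref{WelOpen_thm}\ref{Welgen_it} together with its $k\!=\!|K|$ extension (obtained via~\eref{JSinvdfn_e2b} under hypothesis~\eref{nosphbubb_e}, as discussed right after the theorem) to translate both sides, and then invoking the last three statements of Theorem~2.9 in~\cite{JakeSaraGeom}. Under~\eref{nosphbubb_e} the ST count $\lr{\gamma_1,\ldots,\gamma_l}^{\om,\os}_{\be,k}$ is well-defined independently of auxiliary data and differs from Welschinger's count by a sign of the form $(-1)^{k+1}$ (or $(-1)^k$ under the extended comparison), so any linear identity among ST invariants transports directly to the Welschinger side up to an easily tracked sign.

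For~\ref{sphere_it} I would invoke the sphere-to-boundary-point exchange in Theorem~2.9 of~\cite{JakeSaraGeom}, namely $\lr{\gamma_{X,Y}^\circ,\gamma_1,\ldots,\gamma_l}^{\om,\os}_{\be,k}=\lr{\gamma_1,\ldots,\gamma_l}^{\om,\os}_{\be,k+1}$; applying the comparison theorem to both sides introduces sign factors $(-1)^{k}$ on the left and $(-1)^{k+1}$ on the right, which accounts for the minus sign in~\ref{sphere_it}. For~\ref{lagl_it1} I would invoke the Lagrangian divisor relation in Theorem~2.9 of~\cite{JakeSaraGeom}, which expresses a degree-3 closed insertion $\gamma_0$ at the single boundary point ($k\!=\!1$) as $\lr{\gamma_0,[Y]_X}$ times a sum over lifts $B\!\in\!q_Y^{-1}(\be)$ of closed GW invariants of~$(X,\om)$ with an additional $\PD_X([Y]_X)$ insertion and the sign $(-1)^{\lr{w_2(\os),B}}$; the scalar arises because $\io_Y^*\gamma_0$ paired with the fundamental class of~$Y$ is exactly~$\lr{\gamma_0,[Y]_X}$. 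Applying the comparison at $k\!=\!1$ then produces the overall minus sign in~\ref{lagl_it1}. For~\ref{lagl_it2} I would invoke the vanishing statement in Theorem~2.9 of~\cite{JakeSaraGeom} for $k\!\ge\!2$ when $[Y]_X\!\neq\!0$, which transports to the Welschinger side immediately.

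The main obstacle is sign-tracking rather than any new geometric input. One must confirm consistency between the conventions of~\cite{JakeSaraGeom} and those of Corollary~\ref{WelOpenPrp_crl} regarding the orientation of $S(\cN_yY)$ used to define~$\eta_{X,Y}^\circ$, the sign of the Lefschetz duality isomorphism~$\PD_{X,Y}$, the weight $(-1)^{\lr{w_2(\os),B}}$, and the sign appearing in the $k\!=\!|K|$ extended comparison obtained from~\eref{JSinvdfn_e2b}. A discrepancy in any of these would globally shift a sign in one of~\ref{sphere_it}--\ref{lagl_it1}. Once these conventions are reconciled, each item follows by direct substitution into the relevant ST identity.
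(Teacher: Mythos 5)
Your proposal matches the paper's own (implicit) argument: the corollary is obtained precisely by combining Theorem~\ref{WelOpen_thm} (and its $k\!=\!|K|$ extension via~\eref{JSinvdfn_e2b} under~\eref{nosphbubb_e}) with the last three statements of Theorem~2.9 in~\cite{JakeSaraGeom}, with the only work being the bookkeeping of the comparison signs $(-1)^{|K|+1}$, exactly as you describe. Since the paper offers no further detail beyond this combination, your route is essentially identical to the paper's.
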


\subsection{WDVV-type relations}
\label{WelOpenWDVV_subs}

Let $R$, $(X,\om,Y)$ and $\os$ be as in Theorem~\ref{WelOpen_thm}.
We now use this theorem to translate 
the WDVV-type relations for the open GW-invariants~\eref{JSinvdfn_e} obtained in~\cite{JS3} 
to relations for Welschinger's open invariants~\eref{Welinvdfn_e} under
the assumptions that $R$~is a field and the homomorphism
\BE{iohomdfn_e2}\io_{Y*}\!:H_2(Y;R)\lra H_2(X;R)\EE
induced by the inclusion $\io_Y\!:Y\!\lra\!X$ is trivial.
For $k\!\in\!\Z^{\ge0}$, define
$$[k]=\big\{1,\ldots,k\big\}.$$

Under the assumption~\eref{nosphbubb_e}, we extend the invariants~\eref{Welinvdfn_e}  
to the degree $\be\!=\!0$ and  inputs from $H^0(X;R)$~by
\begin{equation*}\begin{split}
\bllrr{\ga_1,\ldots,\ga_l}_{0,k}^{\om,\os}&=
\begin{cases}
-\lr{\ga_1,\pt},&\hbox{if}~(k,l)\!=\!(1,1);\\
0,&\hbox{otherwise};\end{cases}\\
\bllrr{\ga_1,\ga_2,\ldots,\ga_l,1}_{\be,k}^{\om,\os}&=
\begin{cases}
-1,&\hbox{if}~(\be,k,l)\!=\!(0,1,0);\\
0,&\hbox{otherwise}.\end{cases}
\end{split}\end{equation*}
In light of Theorem~\ref{WelOpen_thm} and the symmetry of 
the open invariants~\eref{JSinvdfn_e}, these extensions are consistent 
with (OGW2) and (OGW3) in Theorem~2.9 of~\cite{JakeSaraGeom}.
If $[Y]_X\!=\!0$ and $\ga$ is a two-dimensional pseudocycle to~$X\!-\!Y$ bounding
a pseudocycle~$\fb$ transverse to~$Y$, we define
$$\lk_{\os}(\ga)\equiv\big|\fb\!\fiber\!\io_Y\big|^{\pm}\,;$$
see Section~\ref{Fp_subs} for the sign conventions for fiber products.
This \sf{linking number} of~$\ga$ and~$Y$ with the orientation determined
by the relative OSpin-structure~$\os$ does not depend on the choice of~$\fb$.
We set $\lk_{\os}(\ga)\!=\!0$ if $\ga$ is not a two-dimensional pseudocycle.

For $l\!\in\!\Z^{\ge0}$, $B\!\in\!H_2(X;\Z)$, and an $\om$-tame almost complex 
structure~$J$, we denote~by $\M^\C_{\{0\}\sqcup[l]}(B;J)$
the moduli space of stable $J$-holomorphic degree~$B$ maps with marked points
indexed by the set $\{0\}\!\sqcup\![l]$.
It carries a canonical orientation.
For each $i\!\in\!\{0\}\!\sqcup\![l]$, let 
$$\ev_i\!:\M^\C_{\{0\}\sqcup[l]}(B;J)\lra X$$
be the evaluation morphism at the $i$-th marked point.
If in addition $\Ga_1,\ldots,\Ga_l$ are maps to~$X$, let
$$\M^\C_{0\sqcup[l]}(B;J)\!\fiber\!\big(\!(i,\Ga_i)_{i\in[l]}\big)
\equiv \M^\C_{0\sqcup[l]}(B;J) _{(\ev_1,\ldots,\ev_l)}\!\!\times\!\!
_{\Ga_1\times\ldots\times\Ga_l}\!\big(\!(\dom\,\Ga_1)\!\times\!\ldots\!\times\!(\dom\,\Ga_l)\!\big).$$
If $J$ is generic and $\Ga_1,\ldots,\Ga_l$ are pseudocycles in general position, then
$$f^\C_{B,(\Ga_i)_{i\in[l]}}\equiv
\Big(\ev_0\!:\M^\C_{0\sqcup[l]}(B;J)\!\!\fiber\!\!\big(\!(i,\Ga_i)_{i\in[l]}\big)\lra X\Big)$$
is a pseudocycle of dimension 
$$\dim\,f^\C_{B,(\Ga_i)_{i\in[l]}}
=\mu_{\om}\big(q_Y(B)\!\big)\!-\!\sum_{i=1}^l\!\big(\codim\,\Ga_i\!-\!2\big)+2$$
transverse to~$Y$.
 
Since $\dim\,Y\!=\!3$, the cohomology long exact sequence for the pair~$(X,Y)$
implies that the restriction homomorphism
\BE{HpXYrel_e}H^p(X,Y;R)\lra H^p(X;R)\EE
is surjective for $p\!=\!4,6$.
Since~$R$ is a field and the homomorphism~\eref{iohomdfn_e2} is trivial,
\eref{HpXYrel_e} is  also surjective for $p\!=\!2$.
Let 
$$\ga_1^{\st}\!\equiv\!1\in H^0(X;R) \qquad\hbox{and}\qquad
\ga_2^{\st},\ldots,\ga_N^{\st}\in H^{2*}(X,Y;R)$$
be such that $\ga_1^{\st},\ga_2^{\st}|_X,\ldots,\ga_N^{\st}|_X$ is a basis for $H^{2*}(X;R)$,
$(g_{ij})_{i,j}$ be the $N\!\times\!N$-matrix given~by
$$g_{ij}=\blr{\ga_i^{\st}\ga_j^{\st},[X]}$$
and $(g^{ij})_{i,j}$ be its inverse.
Let $\Ga_1^{\st}\!=\!\id_X$ and $\Ga_2^{\st},\ldots,\Ga_N^{\st}$ be pseudocycles to~$X\!-\!Y$
representing the Poincare duals of $\ga_2^{\st},\ldots,\ga_N^{\st}$.

For the purpose of WDVV-type equations for the invariants~\eref{Welinvdfn_e},
we extend the signed counts~\eref{Welinvdfn_e0} to the pairs $(k,\be)$ not 
satisfying~\eref{nosphbubb_e},
i.e.~$k\!=\!0$ and $\be\!\in\!H_2(X,Y;\Z)$ is in the image of 
the homomorphism~$q_Y$ in~\eref{qYdfn_e},  as follows.
Let $\ga_1,\ldots,\ga_l$ be elements of $\{1\}\!\sqcup\!H^{2*}(X,Y;R)$.
If $[Y]_X\!\neq\!0$, we~define
$$\bllrr{\ga_1,\ldots,\ga_l}_{\be,0}^{\om,\os}=0.$$
Suppose next that $[Y]_X\!=\!0$.
Let $\Ga_1,\ldots,\Ga_l$ be generic pseudocycles to~$X$ so that $\Ga_i\!=\!\id_X$ 
if $\ga_i\!=\!1$ and $\Ga_i$ is a pseudocycle into~$X\!-\!Y$ 
representing the Poincare dual of~$\ga_i$ otherwise.
For $B\!\in\!H_2(X;\Z)$,
let $(\la_{B,(\ga_i)_{i\in[l]}}^j)_{j\in[N]}\!\in\!R^N$ be such that
$$\big[f^\C_{B,(\Ga_i)_{i\in[l]}}\big]
=\sum_{j=1}^N\la_{B,(\ga_i)_{i\in[l]}}^j
\PD_X\big(\ga_j^{\st}|_X\big)\in H_*(X;R);$$
the tuple $(\la_{B,(\ga_i)_{i\in[l]}}^j)_{j\in[N]}$ depends
only on $B$, $\ga_1,\ldots,\ga_l$, and $\ga_2^{\st},\ldots,\ga_N^{\st}$.
Define
$$\bllrr{\ga_1,\ldots,\ga_l}_{\be,0}^{\om,\os}=
\hbox{RHS of~\eref{Welinvdfn_e0}}~+
\sum_{B\in q_Y^{-1}(\be)}\!\!\!\!\!\!(-1)^{\blr{w_2(\os),B}}
\lk_{\os}\!\Big(f^\C_{B,(\Ga_i)_{i\in[l]}}-
\sum_{j=1}^N\!\la_{B,(\ga_i)_{i\in[l]}}^j\Ga_j^{\st}\Big)$$
in this case.
This number depends on the span of
the elements $\ga_2^{\st},\ldots,\ga_N^{\st}$ in $H^{2*}(X,Y;R)$,
but not on the choice of pseudocycles $\Ga_1,\ldots,\Ga_l$ and
$\Ga_2^{\st},\ldots,\Ga_N^{\st}$ representing 
the Poincare duals of $\ga_1,\ldots,\ga_l$ and $\ga_2^{\st},\ldots,\ga_N^{\st}$,
respectively.
For example,
$$\bllrr{\ga_1,\ga_2}_{0,0}^{\om,\os}=
\lk_{\os}\!\Big(\Ga_1\!\cap\!\Ga_2\!-\!
\sum_{j=1}^N\!\la_{\ga_1\ga_2}^j\Ga_j^{\st}\Big), 
\quad\hbox{where}~~
\ga_1\ga_2\equiv\sum_{j=1}^N\la_{\ga_1\ga_2}^j\ga_j^{\st}|_X\in H^*(X;\Q).$$

Let $\ga\!\equiv\!(\ga_1,\ldots,\ga_l)$ be a tuple of elements 
of $\{1\}\!\sqcup\!H^{2*}(X,Y;R)$.
For $I\!\subset\!\{1,2,\ldots,l\}$, we denote by~$\ga_I$ 
the $|I|$-tuple consisting of the entries of~$\ga$
indexed by~$I$.
If in addition $\be\!\in\!H_2(X,Y;\Z)$, define
$$k_{\be}(\ga_I)
\equiv\frac12\Big(\mu_{\om}(\be)-\sum_{i\in I}\!\big(\!\deg\ga_i\!-\!2\big)\!\!\Big),
\quad
\bllrr{\ga_I}_{\!\be}^{\!\om,\os}=
\begin{cases}\bllrr{\ga_I}_{\be,k_{\be}(\ga_I)}^{\om,\os},&
\hbox{if}~k_{\be}(\ga_I)\!\ge\!0;\\
0,&\hbox{otherwise}.\end{cases}$$

For $i,j\!=\!1,2,\ldots,l$, we define
\begin{alignat*}{2}
\cP(l)&=\big\{(I,J)\!:\{1,2,\ldots,l\}\!=\!I\!\sqcup\!J,~1\!\in\!I\big\},&\quad
\cP_{i;}(l)&=\big\{(I,J)\!\in\!\cP(l)\!:i\!\in\!I\big\},\\
\cP_{;j}(l)&=\big\{(I,J)\!\in\!\cP(l)\!:j\!\in\!J\big\},&\quad
\cP_{i;j}(l)&=\cP_{i;}(l)\!\cap\!\cP_{;j}(l).
\end{alignat*}
For $\be\!\in\!H_2(X,Y;\Z)$, let
\begin{equation*}\begin{split}
\cP_{\C}(\be)&=\big\{(\be',B)\!\in\!H_2(X,Y;\Z)\!\oplus\!H_2(X;\Z)\!:
\,\be'\!+\!q_Y(B)\!=\!\be\big\},\\
\cP_{\R}(\be)&=\big\{(\be_1,\be_2)\!\in\!H_2(X,Y;\Z)\!\oplus\!H_2(X,Y;\Z)\!:
\,\be_1\!+\!\be_2\!=\!\be\big\}.
\end{split}\end{equation*}
Combining Theorem~\ref{WelOpen_thm} above with Theorem~2.10
in~\cite{JakeSaraGeom}, we obtain relations between 
Welschinger's open invariants~\eref{Welinvdfn_e} as stated below.

\begin{crl}\label{WelOpenWDVV_crl}
Let $R$ be a field and $(X,\om,Y)$, $\os$, $\be$, and 
$\ga\!\equiv\!(\ga_1,\ldots,\ga_l)$ be as in Theorem~\ref{WelOpen_thm} with
$$k\equiv\frac12\Big(\mu_{\om}(\be)-\sum_{i=1}^l\!\!\big(\!\deg\mu_i\!-\!2\big)\!\!\Big)
\!-\!1\ge0.$$
Suppose in addition that the homomorphism~\eref{iohomdfn_e2} is trivial.
\BEnum{($\R$WDVV\arabic*)}

\item\label{Wel12rec3_it} If $l\!\ge\!2$ and $k\!\ge\!1$, then
\begin{equation*}\begin{split}
{}\hspace{-1in}
\sum_{\begin{subarray}{c}(\be',B)\in\cP_{\C}(\be)\\ 
(I,J)\in\cP_{2;}(l)\end{subarray}}
\sum_{i,j\in[N]}\!\!\!\!
\blr{\ga_I|_X,\ga^\st_i|_X}^{\!\om}_{\!B}g^{ij}\bllrr{\ga^\st_j,\ga_J}^{\!\om,\os}_{\!\be'}
&-\sum_{\begin{subarray}{c}(\be_1,\be_2)\in\cP_{\R}(\be)\\
(I,J)\in\cP_{2;}(l)\end{subarray}}
\hspace{-.08in}\binom{k\!-\!1}{k_{\be_1}(\ga_I)}
\bllrr{\ga_I}^{\!\om,\os}_{\!\be_1}\bllrr{\ga_J}^{\!\om,\os}_{\!\be_2}\\
&=-\!\!\sum_{\begin{subarray}{c}(\be_1,\be_2)\in\cP_{\R}(\be)\\
(I,J)\in\cP_{;2}(l)\end{subarray}}
\hspace{-.08in}
\binom{k\!-\!1}{k_{\be_1}(\ga_I)\!-\!1}
\bllrr{\ga_I}^{\!\om,\os}_{\!\be_1}
\bllrr{\ga_J}^{\!\om,\os}_{\!\be_2}\,.
\end{split}\end{equation*}

\item\label{Wel03rec3_it} If $l\!\ge\!3$, then
\begin{equation*}\begin{split}
&\hspace{-1in}
\sum_{\begin{subarray}{c}(\be',B)\in\cP_{\C}(\be)\\ 
(I,J)\in\cP_{2;3}(l)\end{subarray}}
\sum_{i,j\in[N]}\!\!\!\!
\blr{\ga_I|_X,\ga^\st_i|_X}^{\om}_{\!B}g^{ij}
\bllrr{\ga^{\st}_j,\ga_J}^{\!\om,\os}_{\!\be'}
-\sum_{\begin{subarray}{c}(\be_1,\be_2)\in\cP_{\R}(\be)\\ 
(I,J)\in\cP_{2;3}(l)\end{subarray}}
\hspace{-.1in}
\binom{k}{k_{\be_1}(\ga_I)}
\bllrr{\ga_I}^{\!\om,\os}_{\!\be_1}\bllrr{\ga_J}^{\!\om,\os}_{\!\be_2}\\
&\hspace{-.8in}
=\sum_{\begin{subarray}{c}(\be',B)\in\cP_{\C}(\be)\\ 
(I,J)\in\cP_{3;2}(l)\end{subarray}}
\sum_{i,j\in[N]}\!\!\!\!
\blr{\ga_I|_X,\ga^\st_i|_X}^{\!\om}_{\!B}g^{ij}
\bllrr{\ga^{\st}_j,\ga_J}^{\!\om,\os}_{\!\be'}
-\!\!\sum_{\begin{subarray}{c}(\be_1,\be_2)\in\cP_{\R}(\be)\\ 
(I,J)\in\cP_{3;2}(l)\end{subarray}}
\hspace{-.08in}
\binom{k}{k_{\be_1}(\ga_I)}
\bllrr{\ga_I}^{\!\om,\os}_{\!\be_1}\bllrr{\ga_J}^{\!\om,\os}_{\!\be_2}\,.
\end{split}\end{equation*}

\EEnum
\end{crl}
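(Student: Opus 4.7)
The plan is to deduce Corollary~\ref{WelOpenWDVV_crl} by translating, term by term, the WDVV-type identities for the Solomon-Tukachinsky invariants $\lr{\cdot}_{\be,k}^{\om,\os}$ stated in Theorem~2.10 of~\cite{JakeSaraGeom}, using Theorem~\ref{WelOpen_thm}\ref{Welgen_it} as the dictionary between the two families. The hypotheses of the corollary---$(X,\om,Y)$ a symplectic sixfold with~\eref{strongpos_e2}, injectivity of~\eref{iohomdfn_e}, triviality of~\eref{iohomdfn_e2}, and $R$ a field---are exactly those under which Theorem~2.10 applies and outputs two identities of the same shape as \ref{Wel12rec3_it} and \ref{Wel03rec3_it}, but with every open factor $\bllrr{\cdot}_{\be'}^{\om,\os}$ replaced by the corresponding $\lr{\cdot}_{\be'}^{\om,\os}$.

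The main step is the substitution $\lr{\ga_{I'}}_{\be'}^{\om,\os}=(-1)^{k_{\be'}(\ga_{I'})}\bllrr{\ga_{I'}}_{\be'}^{\om,\os}$ supplied by Theorem~\ref{WelOpen_thm}\ref{Welgen_it}, applied to each open factor. I would then carry out the sign bookkeeping: the dimension identities
$$k_{\be_1}(\ga_I)+k_{\be_2}(\ga_J)=k+1,\qquad k_{\be'}(\ga^{\st}_j,\ga_J)=k,$$
for the real and complex-open splittings---the first from additivity of $\mu_\om$ and $k_\be(\cdot)$, the second from the dimension constraint on the degree-$B$ sphere moduli space together with $\deg\ga^{\st}_i+\deg\ga^{\st}_j=6$ forced by $g^{ij}\neq 0$---ensure that every monomial on both sides of a given WDVV identity acquires a power of~$-1$ depending only on~$k$. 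This common sign cancels, and what remains is precisely \ref{Wel12rec3_it}, respectively~\ref{Wel03rec3_it}, with the binomial coefficients and signs as displayed.

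The main technical obstacle is reconciling the \emph{extensions} of $\bllrr{\cdot}_{\be,k}^{\om,\os}$ introduced at the start of Section~\ref{WelOpenWDVV_subs} with the corresponding extensions of $\lr{\cdot}_{\be,k}^{\om,\os}$ entering Theorem~2.10 via (OGW2), (OGW3), and the point-free counts~\eref{JSinvdfn_e2b}. The unit and $\be=0$ conventions are set up precisely to match these up to the expected sign. The nontrivial case is $k=0$, $\be\in\Im(q_Y)$ with $[Y]_X=0$, where the Welschinger extension involves the correction $\lk_{\os}\!\big(f^{\C}_{B,(\Ga_i)_{i\in[l]}}-\sum_{j=1}^N\la^j_{B,(\ga_i)_{i\in[l]}}\Ga^{\st}_j\big)$. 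To match this with the ST side, I would invoke the Open Divisor Relation of Proposition~\ref{RDivRel_prp}: the pseudocycle in the argument of~$\lk_{\os}$ maps into $X\!-\!Y$ and is nullhomologous in~$X$ by choice of the coefficients~$\la^j$, so its linking number with~$Y$ is exactly the bordered-insertion contribution that the ST formalism attaches to the disk-sphere degeneration stratum appearing in this boundary case. Once this identification is in place, the term-by-term translation of the ST WDVV yields \ref{Wel12rec3_it} and \ref{Wel03rec3_it} directly.
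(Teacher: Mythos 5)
Your overall route is the same as the paper's: Corollary~\ref{WelOpenWDVV_crl} is obtained by feeding the dictionary of Theorem~\ref{WelOpen_thm}\ref{Welgen_it} into the WDVV-type relations of Theorem~2.10 of~\cite{JakeSaraGeom}, after checking that the extensions of~\eref{Welinvdfn_e} made at the start of Section~\ref{WelOpenWDVV_subs} are consistent with (OGW2), (OGW3) and the $k=0$ conventions on the Solomon--Tukachinsky side; the paper presents this as an immediate consequence and supplies no further detail, so your proposal is a fleshed-out version of the intended argument.

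However, the sign bookkeeping as you state it would fail. The two dimension identities you correctly record give \emph{different} exponents for the two kinds of monomials: under the substitution $\lr{\cdot}_{\be',k'}^{\om,\os}=(-1)^{k'}\bllrr{\cdot}_{\be',k'}^{\om,\os}$, a sphere-type term $\blr{\ga_I|_X,\ga^\st_i|_X}_B^{\om}\,g^{ij}\bllrr{\ga^\st_j,\ga_J}_{\be'}^{\om,\os}$ picks up $(-1)^{k_{\be'}(\ga^\st_j,\ga_J)}=(-1)^k$, while a disk-splitting term $\bllrr{\ga_I}_{\be_1}^{\om,\os}\bllrr{\ga_J}_{\be_2}^{\om,\os}$ picks up $(-1)^{k_{\be_1}(\ga_I)+k_{\be_2}(\ga_J)}=(-1)^{k+1}$. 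So there is no \emph{common} sign to cancel: the translation rescales the identity by an overall sign \emph{and} flips the relative sign between the sphere-type sums and the disk-splitting sums, and this relative flip is precisely what produces the sign pattern displayed in \ref{Wel12rec3_it} and \ref{Wel03rec3_it}; taking "this common sign cancels" literally yields the wrong relative signs. A secondary point: your plan to reconcile the $k=0$, $\be\in\Im q_Y$, $[Y]_X=0$ extension with the ST side via Proposition~\ref{RDivRel_prp} is off target, since that proposition trades codimension-one \emph{boundary} insertions $\fb_i$ for linking numbers $\lk(\prt u,\prt\fb_i)$, whereas the $\lk_{\os}$-correction in that extension accounts for interior sphere-bubble configurations in the point-free counts; the matching has to be made against the $k=0$ definition~\eref{JSinvdfn_e2b} and the corresponding conventions of~\cite{JakeSaraGeom}, not against the Open Divisor Relation.
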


\section{Preliminaries}
\label{prelim_sec}

Section~\ref{Fp_subs} recalls the orientation conventions for fiber products and 
some of their properties from \cite[Sec.~5.1]{JakeSaraGeom}.
The combinatorial objects needed for the geometric presentation of 
the open invariants of~\cite{JS2} in symplectic sixfolds is gathered in 
Section~\ref{notation_subs}.
We describe the relevant moduli spaces of stable disk maps
and specify their orientations in Section~\ref{Ms_subs}.
Section~\ref{OpenGWs_subs} specializes the geometric definition of bounding chain 
from~\cite{JakeSaraGeom} to symplectic sixfolds and uses it to define
counts $J$-holomorphic disks.

\subsection{Fiber products}
\label{Fp_subs}

We say a short exact sequence of oriented vector spaces
$$0\lra V'\lra V\lra V''\lra0$$
is \sf{orientation-compatible} if for an oriented basis $(v'_1,\ldots,v'_m)$ of $V'$, 
an oriented basis $(v''_1,\ldots,v''_n)$ of $V''$, 
and a splitting $j\!:V''\!\lra\!V$, 
$(v'_1,\ldots,v'_m,j(v''_1),\ldots,j(v''_n)\!)$ is an oriented basis of~$V$. 
We say it has sign $(-1)^\ep$ if it becomes orientation-compatible 
after twisting the orientation of $V$ by $(-1)^\ep$. 
We use the analogous terminology for
short exact sequences of Fredholm operators 
with respect to orientations of their determinants;
see \cite[Section~2]{detLB}.

Let $M$ be an oriented manifold with boundary $\prt M$. We orient the normal bundle 
$\cN$ to $\prt M$ by the outer normal direction and orient $\prt M$ so that the short exact sequence  
$$0\lra T_p\prt M\lra T_pM\lra \cN\lra0$$
is orientation-compatible at each point $p\in\prt M$. 
We refer to this orientation of $\prt M$ as the \sf{boundary orientation}. 

We orient $M\!\times\!M$ by the usual product orientation and 
the diagonal $\De_M\subset M\!\times\!M$ by the diffeomorphism 
$$M\lra\De_M, \qquad p\lra(p,p).$$
We orient the normal bundle $\cN\De_M$ of $\De_M$ so that the short exact sequence 
$$0\lra T_{(p,p)}\De_M\lra T_{(p,p)}(M\!\times\!M)\lra \cN\De_M|_{(p,p)}\lra0$$
is orientation-compatible for each point $p\!\in\!M$. 
Thus, the isomorphism
$$\cN\De_M|_{(p,p)} \lra T_pM, \qquad [v,w]\lra w\!-\!v,$$
respects the orientations.

For maps $f\!:M\!\lra\!X$ and $g\!:\Ga\!\lra\!X$, we denote by 
$$f\!\!\fiber\!g\equiv
M_f\!\times_g\!\Ga\equiv\{(p,q)\!\in\!M\!\times\!\Ga\!:\,f(p)=g(q)\}$$
their fiber product. 
If $M,\Ga$, and $X$ are oriented manifolds ($M,\Ga$ possibly with boundary) and 
$f,f|_{\prt M}$ are transverse to $g,g|_{\prt\Ga}$, 
we orient $M_f\!\times_g\!\Ga$ so that the short exact sequence
$$0\lra T_{(p,q)}(M_f\!\times_g\!\Ga)\lra T_{(p,q)}(M\!\times\!\Ga)
\xrightarrow{[d_p\!f,d_qg]} \cN\De_X|_{(f(p),g(q))}\lra0$$
is orientation-compatible for every $(p,q)\!\in\!M_f\!\times_g\!\Ga$. 
The exact sequence 
$$0\lra T_{(p,q)}(M_f\!\times_g\!\Ga)\lra 
T_{(p,q)}(M\!\times\!\Ga)\xrightarrow{d_qg-d_pf}T_{f(p)}X\lra0$$
is then orientation-compatible as well. 
We refer to this orientation of $M_f\!\times_g\!\Ga$ as 
the \sf{fiber product orientation}. 

\begin{lmm}\label{fibersign_lmm}
With the assumptions as above,
$$\prt(M_f\!\times_g\!\Ga)=(-1)^{\dim\,X}\bigg(\!
(-1)^{\dim\,\Ga}(\prt M)_f\!\times_g\!\Ga
\sqcup M_f\!\times_g\!\Ga\bigg).$$
\end{lmm}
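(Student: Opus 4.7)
The approach is a direct pointwise comparison of orientations. Transversality of $f,g$ and of their restrictions to the boundaries of $M$ and $\Ga$ ensures that, as smooth manifolds,
$$\prt(M_f\!\times_g\!\Ga) = (\prt M)_f\!\times_g\!\Ga\, \sqcup\, M_f\!\times_g\!\prt\Ga,$$
so it remains only to determine the orientation sign on each piece.

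For a point $(p,q)\in(\prt M)_f\!\times_g\!\Ga$, set $x=f(p)=g(q)$, $m=\dim M$, and $n=\dim X$. Fix an oriented basis $a=(a_1,\ldots,a_{m-1})$ of $T_p\prt M$ such that $(a,\nu_M)$ is an oriented basis of $T_pM$, with $\nu_M$ the outer normal placed at the end per the paper's convention, an oriented basis $b$ of $T_q\Ga$, an oriented basis $(c_1,\ldots,c_n)$ of $T_xX$, and lifts $(u_i,w_i)\in T_p\prt M\times T_q\Ga$ with $d_qg(w_i)-d_pf(u_i)=c_i$ (available by the transversality of $f|_{\prt M}$ and $g$). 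The defining SES for the fiber product orientation then says that $(e_{fp\cap\prt M},(u,w))$ is an oriented basis of $T(\prt M\!\times\!\Ga)$ with respect to the product orientation $(a,b)$, while $(e_{fp},(u,w))$ is an oriented basis of $T(M\!\times\!\Ga)$ with respect to the product orientation $(a,\nu_M,b)$, where $e_{fp}$ and $e_{fp\cap\prt M}$ denote oriented bases of the respective tangent spaces. Transversality also furnishes $a_0\in T_p\prt M$ and $w_0\in T_q\Ga$ such that $\eta=(\nu_M+a_0,w_0)$ lies in $T(fp)$; since $\eta\equiv\nu_M\pmod{T(\prt M\!\times\!\Ga)}$, it is the outer normal of $\prt(fp)$ in $fp$. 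Writing the boundary orientation as $e_{\prt(fp)}=\epsilon\cdot e_{fp\cap\prt M}$, the relation $(e_{\prt(fp)},\eta)=e_{fp}$ identifies $\epsilon$ as the signed determinant of the change-of-basis from $(a,\nu_M,b)$ to $(e_{fp\cap\prt M},\eta,(u,w))$ in $T(M\!\times\!\Ga)$. The $\nu_M$-row of this matrix has a unique nonzero entry — equal to $1$, in the $\eta$-column at position $m+\dim\Ga-n$ — so cofactor expansion along it reduces the computation to the fiber-product change-of-basis determinant on $(\prt M)_f\!\times_g\!\Ga$, which is $+1$ by construction, multiplied by the cofactor sign $(-1)^{\dim\Ga+\dim X}$.

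An analogous calculation at a point $(p,q)$ with $q\in\prt\Ga$ yields the sign $(-1)^{\dim X}$: the outer normal now takes the form $\tilde\eta=(u_0,\nu_\Ga+b_0)\in T(fp)$, and cofactor expansion along the $\nu_\Ga$-row — the \emph{last} row of the oriented basis of $T(M\!\times\!\Ga)$ in this case — yields the sign $(-1)^{\dim X}$. Factoring out $(-1)^{\dim X}$ from both contributions gives the claimed formula. The main obstacle is purely a bookkeeping one — correctly placing the outer normal at the end of each oriented basis (the paper's convention) and identifying the cofactor positions — rather than any conceptual difficulty.
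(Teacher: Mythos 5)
Your computation is correct, and in fact the paper offers nothing to compare it with: Lemma~\ref{fibersign_lmm} is stated without proof, being recalled (together with the orientation conventions) from Section~5.1 of the companion paper cited there, so a direct verification such as yours is exactly what is needed. The two ingredients you isolate are the right ones: choosing the lifts of an oriented basis of $T_xX$ inside $T_p\prt M\!\times\!T_q\Ga$ (legitimate by the transversality of $f|_{\prt M}$ and $g$), so that one and the same splitting computes both the fiber-product orientation of $M_f\!\times_g\!\Ga$ and that of $(\prt M)_f\!\times_g\!\Ga$, and representing the outward direction of the boundary by a vector $\eta=(\nu_M\!+\!a_0,w_0)$ whose $\nu_M$-coordinate is $1$ and which is the only basis vector with a nonzero $\nu_M$-coordinate. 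With $\nu_M$ in row $m$ and $\eta$ in column $m+\dim\Ga-\dim X$, the cofactor sign is $(-1)^{\dim\Ga+\dim X}$, and in the second case, with $\nu_\Ga$ in the last row $m+\dim\Ga$, it is $(-1)^{\dim X}$; both match the lemma, and I have checked the resulting signs independently on low-dimensional examples. Three small remarks. First, you have implicitly (and correctly) read the second term of the displayed formula as $M_f\!\times_g\!\prt\Ga$; the missing $\prt$ is a typo in the statement. Second, $\eta$ is an outward-pointing vector rather than ``the'' outer normal, but since its class in $T(M_f\!\times_g\!\Ga)/T\,\prt(M_f\!\times_g\!\Ga)$ is a positive multiple of the outer normal, it is a valid splitting for the boundary-orientation sequence, which is all the computation uses; it would be worth one sentence making this explicit. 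Third, your set-theoretic decomposition of the boundary ignores the corner stratum where $p\!\in\!\prt M$ and $q\!\in\!\prt\Ga$; this is consistent with the statement itself and with how it is applied in the paper, but again deserves a parenthetical caveat.
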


For a diffeomorphism $\si\!:M\!\lra\!M$ between oriented manifolds,
we define $\sgn\,\si\!=\!1$ if $\si$ is everywhere orientation-preserving 
and $\sgn\,\si\!=\!-1$ if $\si$ is everywhere orientation-reversing;
this notion is also well-defined if $M$ is orientable and $\si$ preserves each connected
component of~$M$.

\begin{lmm}\label{fibprodflip_lmm}
Suppose $\si_M,\si_{\Ga},\si_X$ are diffeomorphisms of $M,\Ga,X$, respectively,
with well-defined signs.
If the diagram 
$$\xymatrix{M\ar[rr]^f \ar[d]_{\si_M}&& X \ar[d]_{\si_X} && \Ga\ar[ll]_g\ar[d]^{\si_{\Ga}}\\
M\ar[rr]^f&& X  && \Ga\ar[ll]_g}$$
commutes, then the sign of the diffeomorphism
$$M_f\!\!\times_g\!\Ga\lra M_f\!\!\times_g\!\Ga, \qquad
(p,q)\lra \big(\si_M(p),\si_{\Ga}(q)\!\big),$$
is $(\sgn\,\si_M)(\sgn\,\si_{\Ga})(\sgn\,\si_X)$.
\end{lmm}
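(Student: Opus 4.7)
\textbf{Proof plan for Lemma \ref{fibprodflip_lmm}.}
The strategy is to view $d\tilde\si$, where $\tilde\si(p,q)\!=\!(\si_M(p),\si_{\Ga}(q))$, as the leftmost arrow in a morphism of the two short exact sequences that define the fiber product orientation at $(p,q)$ and at~$\tilde\si(p,q)$, and then to read off $\sgn\,\tilde\si$ from the multiplicativity of signs over orientation-compatible short exact sequences. First I would check that $\tilde\si$ actually preserves $M_f\!\times_g\!\Ga$: for $(p,q)$ in the fiber product, the hypothesis $f\!\circ\!\si_M\!=\!\si_X\!\circ\!f$ and $g\!\circ\!\si_{\Ga}\!=\!\si_X\!\circ\!g$ gives $f(\si_M(p))\!=\!\si_X(f(p))\!=\!\si_X(g(q))\!=\!g(\si_{\Ga}(q))$, and likewise a global check that $\tilde\si$ has a well-defined sign (i.e.\ preserves each component of the fiber product) follows from the corresponding property of $\si_M,\si_{\Ga}$.

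Next I would set up the commutative ladder
$$
\xymatrix@C=18pt{
0\ar[r]& T_{(p,q)}(M_f\!\times_g\!\Ga)\ar[r]\ar[d]^{d\tilde\si}
& T_{(p,q)}(M\!\times\!\Ga)\ar[r]^{[d_pf,d_qg]}\ar[d]^{d\si_M\oplus\,d\si_{\Ga}}
& \cN\De_X|_{(f(p),g(q))}\ar[r]\ar[d]^{\ov{\si_X}}
& 0\\
0\ar[r]& T_{\tilde\si(p,q)}(M_f\!\times_g\!\Ga)\ar[r]
& T_{\tilde\si(p,q)}(M\!\times\!\Ga)\ar[r]
& \cN\De_X|_{(\si_X f(p),\si_X g(q))}\ar[r]
& 0
}
$$
whose commutativity is immediate from the hypothesis $\si_X\!\circ\!f\!=\!f\!\circ\!\si_M$ and $\si_X\!\circ\!g\!=\!g\!\circ\!\si_{\Ga}$. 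Both rows are orientation-compatible by the definition of the fiber product orientation recalled in Section~\ref{Fp_subs}.

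The sign of the middle vertical arrow is $(\sgn\,\si_M)(\sgn\,\si_{\Ga})$ by the product orientation convention. For the right vertical arrow, I would use the orientation-preserving isomorphism $\cN\De_X|_{(x,x)}\!\lra\!T_xX$, $[v,w]\!\mapsto\!w\!-\!v$, recalled in Section~\ref{Fp_subs}: under this identification $\ov{\si_X}$ becomes $d\si_X$, because $(\si_X\!\times\!\si_X)[v,w]\!=\![d\si_X v,d\si_X w]$ corresponds to $d\si_X(w\!-\!v)$. Hence the right arrow has sign $\sgn\,\si_X$.

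Finally, since the determinant line is multiplicative over orientation-compatible short exact sequences, the signs of the three vertical arrows satisfy $(\sgn\,d\tilde\si)(\sgn\,\ov{\si_X})\!=\!\sgn(d\si_M\!\oplus\!d\si_{\Ga})$, i.e.\ $\sgn\,\tilde\si\!=\!(\sgn\,\si_M)(\sgn\,\si_{\Ga})(\sgn\,\si_X)$, using $(\sgn\,\si_X)^{-1}\!=\!\sgn\,\si_X$. The only real obstacle is the bookkeeping for the induced map $\ov{\si_X}$ on $\cN\De_X$; once one confirms it corresponds to $d\si_X$ under the sign-preserving identification with $TX$, the rest is a formal application of the multiplicativity of signs in short exact sequences.
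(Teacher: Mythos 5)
Your proof is correct. The paper does not include its own proof of Lemma~\ref{fibprodflip_lmm} (it is recalled, along with Lemmas~\ref{fibersign_lmm} and~\ref{fibprodisom_lmm1}, from Section~5.1 of~\cite{JakeSaraGeom}), so there is nothing to compare against directly, but your argument is the natural and correct one: the ladder of orientation-compatible short exact sequences you set up is exactly the right structure, the identification of the right vertical arrow on $\cN\De_X$ with $d\si_X$ under the sign-preserving isomorphism $[v,w]\mapsto w\!-\!v$ is the key computation, and the conclusion follows from multiplicativity of signs over orientation-compatible short exact sequences together with $(\sgn\si_X)^{-1}=\sgn\si_X$. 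One small remark on your opening paragraph: having well-defined signs for $\si_M,\si_\Ga,\si_X$ does not literally imply that $\tilde\si$ preserves each connected component of the fiber product (a single component of $M\times\Ga$ can meet $M_f\!\times_g\!\Ga$ in several components that $\tilde\si$ permutes), but this does not matter for the conclusion, since the pointwise multiplicativity argument shows directly that $\tilde\si$ is either everywhere orientation-preserving or everywhere orientation-reversing, which is all that the paper's definition of a well-defined sign requires.
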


Let $M,\Ga,X$ and $f,g$ be as above Lemma~\ref{fibersign_lmm}.
Suppose in addition that $e\!:M\!\lra\!Y$ and $h\!:C\!\lra\!Y$.
Let \hbox{$e'\!:M_f\!\times_g\!\Ga\!\lra\!Y$} be the map induced by~$e$;
see Figure~\ref{fibprodisom_fig1}.
There is then a natural bijection
\BE{fibprodisom_e1} \big(M_f\!\times_g\!\Ga\big)\!\,_{e'}\!\!\times_h\!C \approx  
M_{(f,e)}\!\!\times_{g\times h}\!(\Ga\!\times\!C)\,.\EE
If $C,Y$ are oriented manifolds and all relevant maps are transverse,
then both sides of this bijection inherit fiber product orientations.
For any map $h\!:M\!\lra\!Z$ between manifolds, let
$$\codim\,h=\dim\,Z-\dim\,M\,.$$

\vspace{-.15in}

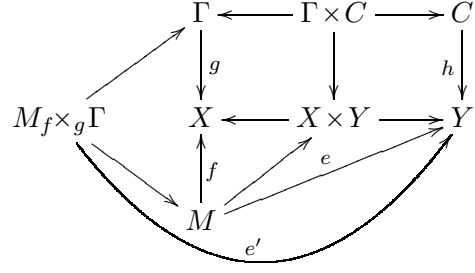
\begin{figure}
$$\xymatrix{& \Ga\ar[d]^g& \Ga\!\times\!C \ar[d]\ar[l]\ar[r]& C\ar[d]_h& \\
M_f\!\!\times_g\!\Ga \ar[ur]\ar[dr] \ar@/_4.5pc/[rrr]^{\!\!\!\!e'}& 
X&  X\!\times\!Y \ar[l]\ar[r] & Y\\
& M\ar[u]_{\!f}\ar[ur]\ar[urr]^{\!e}}$$
\caption{The maps of Lemma~\ref{fibprodisom_lmm1}.}
\label{fibprodisom_fig1}
\end{figure}

\begin{lmm}\label{fibprodisom_lmm1}
The diffeomorphism~\eref{fibprodisom_e1} has sign $(-1)^{(\dim\,X)(\codim\,h)}$ with respect 
to the fiber product orientations on the two sides.
\end{lmm}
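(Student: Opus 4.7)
The bijection~\eref{fibprodisom_e1} is the identity on the common underlying subset
$$\big\{(p,q,r)\!\in\!M\!\times\!\Ga\!\times\!C\!:\,f(p)\!=\!g(q),~e(p)\!=\!h(r)\big\}$$
of $M\!\times\!\Ga\!\times\!C$, so the lemma reduces to comparing the two orientations that the fiber product convention of Section~\ref{Fp_subs} assigns to the common tangent space. My plan is to trace each orientation through the ambient $T(M\!\times\!\Ga\!\times\!C)$ by picking compatible oriented bases and bookkeeping sign changes from transpositions.

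Let $L$ and $R$ denote the left-hand and right-hand sides of~\eref{fibprodisom_e1}, respectively. For $L$, two applications of the convention give orientation-compatible short exact sequences
$$0\lra T(M_f\!\times_g\!\Ga)\lra T(M)\!\oplus\!T(\Ga)\lra\cN\De_X\lra0$$
and
$$0\lra T(L)\lra T(M_f\!\times_g\!\Ga)\!\oplus\!T(C)\lra\cN\De_Y\lra0.$$
Substituting the first into the second and using the orientation-preserving identifications $\cN\De_X\!\cong\!T(X)$ and $\cN\De_Y\!\cong\!T(Y)$, the concatenation of oriented bases of $T(L)$, $T(Y)$, $T(X)$ becomes an oriented basis of $T(M)\!\oplus\!T(\Ga)\!\oplus\!T(C)$ only after commuting the $T(X)$ summand past the $T(C)$ slot, which contributes a sign of $(-1)^{(\dim X)(\dim C)}$.

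For $R$, a single application of the convention yields the orientation-compatible sequence
$$0\lra T(R)\lra T(M)\!\oplus\!T(\Ga)\!\oplus\!T(C)\lra\cN\De_{X\times Y}\lra0.$$
Since $\cN\De_{X\times Y}\!\cong\!T(X\!\times\!Y)\!=\!T(X)\!\oplus\!T(Y)$ as oriented spaces, the concatenation of oriented bases of $T(R)$, $T(X)$, $T(Y)$ is directly an oriented basis of the ambient space. Matching the two expressions for this ambient oriented basis requires an additional swap of the $T(Y)$ and $T(X)$ summands, contributing $(-1)^{(\dim X)(\dim Y)}$, so the total sign discrepancy between the orientations on $T(L)$ and $T(R)$ is $(-1)^{(\dim X)(\dim C+\dim Y)}\!=\!(-1)^{(\dim X)(\codim h)}$, as claimed.

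The only real obstacle is careful bookkeeping: making sure that the intermediate $\cN\De_X$ factor lands between $T(\Ga)$ and $T(C)$ in the left-side expression before being commuted past $T(C)$, and that the orientation-preserving identifications of normal bundles of diagonals with tangent bundles of products are applied consistently in the single-diagonal and product-diagonal cases. No input beyond the conventions of Section~\ref{Fp_subs} is needed.
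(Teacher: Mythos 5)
Your computation is correct. The paper itself gives no proof of this lemma---Section~\ref{Fp_subs} only recalls it from Section~5.1 of~\cite{JakeSaraGeom}---so there is no in-text argument to compare against, but your bookkeeping is exactly the expected verification: both orientations are induced from the same ambient space $T(M\!\times\!\Ga\!\times\!C)$, the iterated construction produces the quotient blocks in the order $TY,TX$ after moving the $TX$ block past $TC$ (sign $(-1)^{(\dim\,X)(\dim\,C)}$), the one-step construction produces them in the order $TX,TY$, and reordering costs $(-1)^{(\dim\,X)(\dim\,Y)}$; since $\dim\,C+\dim\,Y\equiv\codim\,h\pmod 2$, the total discrepancy is $(-1)^{(\dim\,X)(\codim\,h)}$. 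The only point worth stating explicitly in a final write-up is that the orientation induced on the common subspace by an oriented ambient space and an ordered oriented quotient is independent of the choice of splitting, which is what licenses comparing the two block decompositions of $T(M\!\times\!\Ga\!\times\!C)$ directly.
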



\subsection{Combinatorial notation}
\label{notation_subs}

Let $(X,\om)$ be a compact symplectic sixfold,
$Y\!\subset\!X$ be a compact Lagrangian submanifold, 
$$H_2^{\om}(X,Y)=\big\{\be\!\in\!H_2(X,Y;\Z)\!:\om(\be)\!>\!0~\hbox{or}~\be\!=\!0\big\},$$
and $\cJ_{\om}$ be the space of $\om$-compatible almost complex structures on~$X$.
We denote by $\PC(X\!-\!Y)$ the collection of pseudocycles to~$X\!-\!Y$ with coefficients in~$R$
as defined in \cite[Sec~1]{pseudo},
by $\FPC(X\!-\!Y)$ the collection of finite subsets of~$\PC(X\!-\!Y)$, 
and by $\FPt(Y)$ the collection of finite subsets of~$Y$.
Let 
$$\cC_{\om}(Y)=\big\{(\be,K,L)\!:\,\be\!\in\!H_2^{\om}(X,Y),\,
K\!\in\!\FPt(Y),\,L\!\in\!\FPC(X\!-\!Y),\,
(\be,K,L)\!\neq\!(0,\eset,\eset)\big\}.$$
This collection has a natural partial order:
$$(\be',K',L')\preceq(\be,K,L) \qquad\hbox{if}\qquad
\be\!-\!\be'\in H_2^{\om}(X,Y), \quad
K'\!\subset\!K, \quad\hbox{and}\quad L'\!\subset\!L.$$
The elements $(0,K,L)$ of $\cC_{\om}(Y)$ with $|K|\!+\!|L|\!=\!1$ are minimal 
with respect to this partial order.
For each element $\al\!\equiv\!(\be,K,L)$ of~$\cC_\om(Y)$, we define 
\begin{gather*}
\be(\al)\equiv\be,\qquad K(\al)\equiv K,\qquad L(\al)\equiv L, \\
\dim(\al)
=\mu_Y^{\om}(\be)\!-\!2|K|\!-\!\sum_{\Ga\in L}\!\!\big(\codim\,\Ga\!-\!2\big),\quad
\cC_{\om;\al}(Y)=\big\{\al'\!\in\!\cC_{\om}(Y)\!:\,\al'\!\prec\!\al\big\}.
\end{gather*}

\vspace{-.15in}

For $\al\!\in\!\cC_{\om}(Y)$, let
\begin{equation*}\begin{split}
\cD_{\om}(\al)=\bigg\{\!\Big(\be_{\bu},k_{\bu},L_{\bu},(\al_i)_{i\in[k_{\bu}]}\Big)
\!\!:\be_{\bu}\!\in\!H_2^{\om}(X,Y),\,k_{\bu}\!\in\!\Z^{\ge0},\,L_{\bu}\!\subset\!L(\al),\,
(\be_{\bu},k_{\bu},L_{\bu})\!\neq\!(0,1,\eset),&\\
\al_i\!\in\!\cC_{\om}(Y)\,\forall\,i\!\in\![k_{\bu}],\,
\be_{\bu}\!+\!\sum_{i=1}^{k_{\bu}}\be(\al_i)\!=\!\be(\al),
~\bigsqcup_{i=1}^{k_{\bu}}\!K(\al_i)\!=\!K(\al),~
L_{\bu}\!\sqcup\!\bigsqcup_{i=1}^{k_{\bu}}\!L_i(\al)\!=\!L(\al)&\bigg\}\,.
\end{split}\end{equation*}
Since $\al_i\!\prec\!\al$ for every  
\BE{degenelemdfn_e} \eta\equiv\big(\be_{\bu},k_{\bu},L_{\bu},(\al_i)_{i\in[k_{\bu}]}\big)
\equiv\big(\be_{\bu},k_{\bu},L_{\bu},(\be_i,K_i,L_i)_{i\in[k_{\bu}]}\big)
\in \cD_{\om}(\al)\EE
and every $i\!\in\![k_{\bu}]$, $k_{\bu}\!=\!0$ if $\al$ is a minimal element 
of~$\cC_{\om}(Y)$.
Thus,
$$\cD_{\om}\big(0,\{\pt\},\eset\big)=\eset~~\forall\,\pt\!\in\!Y
\quad\hbox{and}\quad
\cD_{\om}\big(0,\eset,\{\Ga\}\big)=\big\{\big(0,0,\{\Ga\},()\!\big)\big\}
~~\forall\,\Ga\!\in\!\PC(X\!-\!Y)\,.$$
For $\eta\!\in\!\cD_{\om}(\al)$ as in~\eref{degenelemdfn_e} and $i\!\in\![k_{\bu}]$, 
we define 
\begin{gather*}
\be_{\bu}(\eta)=\be_{\bu},\quad k_{\bu}(\eta)=k_{\bu}, \quad L_{\bu}(\eta)=L_{\bu}, \\
\be_i(\eta)=\be_i,\quad K_i(\eta)=K_i, \quad L_i(\eta)=L_i, \quad
\al_i(\eta)=\al_i=(\be_i,K_i,L_i).
\end{gather*}

\subsection{Moduli spaces}
\label{Ms_subs}

We denote by $\D^2\!\subset\!\C$ the unit disk with the induced complex structure,
by  $\D^2\!\v\!\D^2$ the union of two disks joined at a pair of boundary points,
and by $S^1\!\subset\!\D^2$ and $S^1\!\v\!S^1\!\subset\!\D^2\!\v\!\D^2$ 
the respective boundaries.
We orient the boundaries counterclockwise; 
thus, starting from a smooth point~$x_0$ of $S^1\!\v\!S^1$, 
we proceed counterclockwise to the node~$\nod$,
then circle the second copy of~$S^1$ counterclockwise back to~$\nod$,
and return to~$x_0$ counterclockwise from~$\nod$.
We call smooth points $x_0,x_1,\ldots,x_k$ on $S^1$ or $S^1\!\v\!S^1$ \sf{ordered by position}
if they are traversed counterclockwise.

Let $k,l\!\in\!\Z^{\ge0}$ with $k\!+\!2l\!\ge\!3$.
We denote by $\cM_{k,l}^{\uo}$ be the moduli space of $k$ distinct boundary marked points 
$x_1,\ldots,x_k$ and $l$ distinct interior marked points $z_1,\ldots,z_l$ on the unit disk~$\D$
(the superscript~{\it uo} refers to the boundary marked points being {\it unordered} 
with respect to their position on $S^1\!\subset\!\D^2$ relative to the order on~$[k]$).
We orient $\cM_{1,1}^{\uo}$ as a plus point.
The space~$\cM_{3,0}^{\uo}$ consists of two points, 
$\cC_{3,0}^+$ with the three boundary points ordered by position and
$\cC_{3,0}^-$ with the three boundary points not ordered by position.
We orient $\cC_{3,0}^+$ as a plus point and $\cC_{3,0}^-$ as a minus point.
We identify $\cM_{0,2}^{\uo}$ with the interval $(0,1)$ by taking 
$z_1\!=\!0$ and $z_2\!\in\!(0,1)$ and orient it by the negative orientation of~$(0,1)$. 

We orient other $\cM_{k,l}^{\uo}$ inductively. 
If $k\!\ge\!1$, we orient $\cM_{k,l}^{\uo}$ so that the short exact sequence
\BE{cml_e}0\lra T_{x_k}S^1\lra T\cM_{k,l}^{\uo}
\xlra{\nd\ff^\R_k} T\cM_{k-1,l}^{\uo}\lra0\EE
induced by the forgetful morphism $\ff^\R_k$ dropping $x_k$ has sign $(-1)^k$ 
with respect to the counter-clockwise orientation of~$S^1$. 
Thus, 
$$T\cM_{k,l}^{\uo}\approx T\cM_{k-1,l}^{\uo}\oplus T_{x_k}S^1.$$
If $l\!\ge\!1$, we orient $\cM_{k,l}^{\uo}$ so that the short exact sequence
\BE{cmk_e}0\lra T_{z_l}\D\lra T\cM_{k,l}^{\uo}
\xlra{\nd\ff^\C_l} T\cM_{k,l-1}^{\uo}\lra0\EE
induced by the forgetful morphism $\ff^\C_l$ dropping $z_l$ 
is orientation-compatible with respect to the complex orientation of~$\D$. 
By a direct check, the orientations of $\cM_{1,2}^{\uo}$ induced from 
$\cM_{0,2}^{\uo}$ via~\eref{cml_e} and from $\cM_{1,1}^{\uo}$ via~\eref{cmk_e} are the same, 
and the orientations of $\cM_{3,1}^{\uo}$ induced from~$\cM_{1,1}^{\uo}$
via~\eref{cml_e} and from~$\cM_{3,0}^{\uo}$  via~\eref{cmk_e} are also the same. 
Since the fibers of $\ff^{\C}_l$ are even-dimensional, 
it follows that the orientation on~$\cM_{k,l}^{\uo}$ above is well-defined. 

Let $(X,\om)$ be a symplectic manifold, $Y\!\subset\!X$ be a Lagrangian submanifold,
$\be\!\in\!H^\om_2(X,Y)$, and $J\!\in\!\cJ_\om$.
For a finite ordered set~$K$ and a finite set~$L$, we denote by 
$\M_{K,L}^{\uo,\st}(\be;J)$
the moduli space of stable $J$-holomorphic degree~$\be$ maps from
$(\D^2,S^1)$ and $(\D^2\!\v\!\D^2,S^1\!\v\!S^1)$ to~$(X,Y)$ with 
boundary and interior marked points indexed by~$K$ and~$L$, respectively.
Let 
$$\M_{K,L}^{\uo}(\be;J)\subset \M_{K,L}^{\uo,\st}\!(\be;J)$$
be the subspace of maps from~$(\D^2,S^1)$.
If $K\!=\![k]$ for $k\!\in\!\Z^{\ge0}$ (resp.~$L\!=\![l]$ for $l\!\in\!\Z^{\ge0}$),
we write~$k$ for~$K$ (resp.~$l$ for~$L$) in the subscripts of these moduli spaces. 
For 
\BE{udfn_e} [\u]\equiv\big[u\!:(\D,S^1)\!\lra\!(X,Y),(x_i)_{i\in[k]},(z_i)_{i\in[l]}\big]
\in \M_{k,l}^{\uo}(\be;J)\,,\EE
let
$$D_{J;\u}\!:\Ga\big(u^*TX,u|_{S^1}^*TY\big)\lra
\Ga\big(T^*\D^{0,1}\!\otimes_{\C}\!u^*(TX,J)\big)$$
be the linearization of the $\{\dbar_J\}$-operator on the space of maps from~$(\D,S^1)$ to $(X,Y)$. 
By Proposition~8.1.1 in~\cite{FOOO}, 
a relative OSpin-structure~$\os$ on~$Y$ determines an orientation on $\det(D_{J;\u})$. 

We orient $\M^{\uo}_{k,l}(\be;J)$ by requiring the short exact sequence 
$$0\lra \ker D_{J;\u}\lra T_{\u}\M_{k,l}^{\uo}(\be;J)
\xlra{\nd\ff} T_{\ff(\u)}\cM_{k,l}^{\uo}\lra0$$
to be orientation-compatible, where $\ff$ is the forgetful morphism dropping the map part of~$\u$.
This orientation extends over $\M^{\uo,\st}_{k,l}\!(\be;J)$.
If $K$ is a finite ordered set and $L$ is a finite set,
we orient $\M_{K,L}^{\uo,\st}\!(\be;J)$ from $\M_{|K|,|L|}^{\uo,\st}(\be;J)$
by identifying~$K$ with~$[|K|]$ as ordered sets and~$L$ with~$[|L|]$ as~sets.

\vspace{.1in}

\begin{rmk}\label{Morient_rmk}
The above paragraph endows $\M^{\uo,\st}_{K,L}\!(\be;J)$ with an orientation under
the assumption that $|K|\!+\!2|L|\!\ge\!3$.
If $|K|\!+\!2|L|\!<\!3$, one first stabilizes the domain of~$\u$ by adding one or 
two interior marked points, then orients 
the tangent space of the resulting map as above, and 
finally drops the added marked points using the canonical complex orientation of~$\D$;
see the proof of Corollary~1.8 in~\cite{Penka1}.
\end{rmk}

\vspace{-.1in}

For $i\!\in\!K$ and $i\!\in\!L$, let
$$\evb_i\!:\M_{K,L}^{\uo,\st}(\be;J)\!\lra Y \qquad\hbox{and}\qquad
\evi_i\!:\M_{K,L}^{\uo,\st}\!(\be;J)\lra X$$
be the evaluation morphisms at the $i$-th boundary marked point and 
the $i$-th interior marked point, respectively. 
If $M\!\subset\!\M_{K,L}^{\uo,\st}(\be;J)$, we denote the restrictions of 
$\evb_i$ and $\evi_i$ to $M$ also by~$\evb_i$ and~$\evi_i$. 
If in addition $m,m'\!\in\!\Z^{\ge0}$, 
$$\big(\fb_s\!:Z_{\fb_s}\!\lra\!Y\big)_{\!s\in[m]}
\qquad\hbox{and}\qquad
\big(\Ga_s\!:Z_{\Ga_s}\!\lra\!X\big)_{\!s\in [m']}$$
are tuples of maps and $i_1,\ldots,i_m\!\in\![k]$ and $j_1,\ldots,j_{m'}\!\in\!L$ are
distinct elements, 
let
\begin{equation*}\begin{split}
&M\!\!\fiber\!\big(\!(i_s,\fb_s)_{s\in[m]};(j_s,\Ga_s)_{s\in[m']}\big)\\
&~\equiv 
M_{(\evb_{i_1},\ldots,\evb_{i_m},\evi_{j_1},\ldots,\evi_{j_{m'}})}\!\!
\times_{\fb_1\times\ldots\times\fb_m\times\Ga_1\times\ldots\times\Ga_{m'}}\!\!
\big(Z_{\fb_1}\!\times\!\ldots\!\times\!Z_{\fb_m}\!\times\!
Z_{\Ga_1}\!\times\!\ldots\!\times\!Z_{\Ga_{m'}}\!\big)
\end{split}\end{equation*} 
be their fiber product with~$M$. 
If $M$ is an oriented manifold and~$\fb_s$ and~$\Ga_s$ are smooth maps 
from oriented manifolds satisfying the appropriate transversality conditions, 
then we orient this space as in Section~\ref{Fp_subs}. 
For $i\!\in\![k]$ with $i\!\neq\!i_s$ for any $s\!\in\![m]$ 
(resp. $i\!\in\!L$ with $i\!\neq\!j_s$ for any $s\!\in\![m']$), 
we define
$$\evb_i \ (\textnormal{resp. }\evi_i)\!:
M\!\fiber\!\big(\!(i_s,\fb_s)_{s\in[m]};(j_s,\Ga_s)_{s\in[m']}\big)
\lra Y\ (\tn{resp. }X)$$
to be the composition of the evaluation map~$\evb_i$ (resp. $\evi_i$) defined above with 
the projection to the first component.

\subsection{Open Gromov-Witten invariants}
\label{OpenGWs_subs}

In the remainder of this paper, we use the terms (\sf{bordered}) $\Z_2$-\sf{pseudocycle}
and \sf{pseudocycle} to mean the respective pseudocycles in the usual sense
taken with~$R$-coefficients;
see the last part of Section~3 in~\cite{RealWDVV3} for precise definitions.
We recall that every $R$-homology class in a manifold can be represented
by a pseudocycle in this sense, which is unique up to equivalence;
see Theorem~1.1 in~\cite{pseudo}.

Let $(X,\om)$ be a symplectic sixfold and $Y\!\subset\!X$ be a Lagrangian submanifold.
For a point $\pt\!\in\!Y$, we denote its inclusion into~$Y$ also by~$\pt$.
For $\be\!\in\!H^\om_2(X,Y)$, $k\!\in\!\Z^{\ge0}$, a finite set~$L$,
and $J\!\in\!\cJ_\om$, let
$$\M_{k,L}^{\st}(\be;J)\subset\M_{k,L}^{\uo,\st}(\be;J)$$ 
be the subspace of maps with the boundary marked points ordered by position.
If in addition \hbox{$\eta\!\in\!\cD_\om(\al)$} for some $\al\!\in\!\cC_{\om}(Y)$, define 
$$\M_{\eta;J}\equiv\M^\st_{k_\bu(\eta),L_\bu(\eta)}(\be_\bu(\eta);J),\quad
\M^+_{\eta;J}\equiv\M^\st_{k_\bu(\eta)+1,L_\bu(\eta)}(\be_\bu(\eta);J).$$

\begin{dfn}\label{bndch_dfn}
Let $R$, $(X,\om)$, $Y$, $\os$, and $\al\!\equiv\!(\be,K,L)$ be as in Theorem~\ref{WelOpen_thm}.
A \sf{bounding chain} on~$(\al,J)$ is 
a collection $(\fb_{\al'})_{\al'\in\cC_{\om;\al}(Y)}$ of bordered pseudocycles into~$Y$ such~that
\BEnum{(BC\arabic*)}

\item\label{BCdim_it} $\dim\,\fb_{\al'}\!=\!\dim(\al')\!+\!2$ 
for all $\al'\!\in\!\cC_{\om;\al}(Y)$;

\item\label{BC0_it} $\fb_{\al'}\!=\!\eset$ unless
$\al'\!=\!(0,\{\pt\},\eset)$ for some $\pt\!\in\!K$ or 
$\dim(\al')\!=\!0$;

\item\label{BCi_it} $\fb_{(0,\{\pt\},\eset)}\!=\!\pt$ for all $\pt\!\in\!K$;

\item\label{BCprt_it} for all $\al'\!\in\!\cC_{\om;\al}(Y)$ such that 
$\dim(\al')\!=\!0$,
\BE{BCprt_e}\prt\fb_{\al'}=
\bigg(\!\evb_1\!:\!\!\!\bigcup_{\eta\in\cD_{\om}(\al')}\hspace{-.12in}
(-1)^{k_\bu(\eta)}\M^+_{\eta;J}\!\fiber\!\!
\big(\!(i\!+\!1,\fb_{\al_{i}(\eta)})_{i\in[k_{\bu}(\eta)]}; (i,\Ga_i)_{\Ga_i\in L_{\bu}(\eta)}\big)
\lra Y\!\bigg).\EE
\EEnum
\end{dfn}

Since the dimension of every pseudocycle $\Ga_i\!\in\!L$ is even,
the oriented morphism
\BE{fbbetadfn_e}
\fbb_{\eta}\!\equiv
 \bigg(\!\evb_1\!:(-1)^{k_\bu(\eta)}\M^+_{\eta;J}\!\fiber\!
\big(\!(i\!+\!1,\fb_{\al_{i}(\eta)})_{i\in[k_{\bu}(\eta)]};(i,\Ga_i)_{\Ga_i\in L_{\bu}(\eta)}\big)
\lra Y\!\bigg)\EE
in~\eref{BCprt_e} does not depend on the choice of identification of~$L_{\bu}(\eta)$
with~$[|L_{\bu}(\eta)|]$.
By Lemma~3.1 in~\cite{JakeSaraGeom}, the~map
\BE{fbbdfn_e}
\fbb_{\al'}\!\equiv\bigcup_{\eta\in\cD_{\om}(\al')}\hspace{-.1in}\!\!\fbb_{\eta}\EE
with orientation induced by the relative OSpin-structure $\os$ on~$Y$ is a pseudocycle 
for every \hbox{$\al'\!\in\!\cC_{\om;\al}(Y)\!\cup\!\{\al\}$}. 
If in addition $\dim(\al)\!=\!2$,
then $\fbb_{\al}$ is a pseudocycle of codimension~0.
Its degree determines a count of $J$-holomorphic disks in~$(X,Y)$
through $|K|\!+\!1$ points in~$Y$ as in~\eref{JSinvdfn_e2}.

A bounding chain $(\fb_{\al'})_{\al'\in\cC_{\om;\al}(Y)}$ as in Definition~\ref{bndch_dfn}
can also be used to define the counts~\eref{JSinvdfn_e} of $J$-holomorphic disks 
in the following way.
We denote the signed cardinality of a finite set~$S$ of signed points by~$|S|^{\pm}$. 
If $S$ is not a finite set of signed points, we set $|S|^{\pm}\!\equiv\!0$. 
If $\eta\!\in\!\cD_\om(\al)$, let
$$s^*(\eta)\equiv
\begin{cases}\frac{1}{k_\bu(\eta)}\!-\!\frac{1}{2},&
\hbox{if}~k_\bu(\eta)\!\neq\!0,\\
1,&\hbox{if}~k_\bu(\eta)\!=\!0.\end{cases}$$
Define
\BE{JSinvdfn_e2b}
\blr{L}_{\be;K}^{\om,\os}
\equiv \sum_{\eta\in\cD_{\om}(\al)}\hspace{-0.3cm}(-1)^{k_\bu(\eta)}s^*(\eta)
\Big|\M_{\eta;J}\!\fiber\!\!\big(\!(i,\fb_{\al_i(\eta)})_{i\in[k_\bu(\eta)]};
(i,\Ga_i)_{\Ga_i\in L_\bu(\eta)}\big)\Big|^{\pm}+\frac12\sum_{p\in K}
\!\blr{L}_{\!\be;K-\{p\}}^{\!\om,\os}.\EE
This number vanishes unless $\dim(\al)\!=\!0$.
Unlike~\eref{JSinvdfn_e2}, \eref{JSinvdfn_e2b} provides a definition 
of the counts~\eref{JSinvdfn_e} with $k\!=\!0$.
By Theorem~2.7(2) in~\cite{JakeSaraGeom}, 
$$\blr{L}_{\be;K}^{\om,\os}
~\hbox{in~\eref{JSinvdfn_e2b}} ~~=~~
\blr{L}_{\be;K-\{\pt\}}^{\om,\os}~\hbox{in~\eref{JSinvdfn_e2}}$$
for any $\pt\!\in\!K$ if
$\lr{L}_{\be;K-\{\pt\}}$ does not depend on~$\pt\!\in\!K$.

\section{Proof of Theorem~\ref{WelOpen_thm}}
\label{WelOpenPf_sec}

For the remainder of the paper, we take $(X,\om,Y)$ and~$\os$ as in Theorem~\ref{WelOpen_thm}.
Let $\ga_1,\ga_2$ be smooth maps from oriented closed one-manifolds 
into the oriented closed three-manifold~$Y$ with disjoint images.
If $\ga_1\!=\!\prt\fb_1$ and $\ga_2\!=\!\prt\fb_2$ for some bordered 
pseudocycles~$\fb_1$ and~$\fb_2$ into~$Y$
so that~$\fb_1$ is transverse to~$\ga_2$ and 
$\fb_2$ is transverse to~$\ga_1$, we define
\BE{lkdfn_e}\lk(\ga_1,\ga_2) \equiv \big|\fb_1\!\fiber\!\ga_2\big|^{\pm}
=-\big|\ga_1\!\fiber\!\fb_2\big|^{\pm}=\big|\fb_2\!\fiber\!\ga_1\big|^{\pm}
=-\big|\ga_2\!\fiber\!\fb_1\big|^{\pm}\,;\EE
the first and last equalities above hold by Lemma~\ref{fibersign_lmm},
while the middle one follows from Lemma~\ref{fibprodflip_lmm}.
The sign of a point $(p,q)$  of $\fb_1\!\fiber\!\ga_2$ is the sign of the isomorphism
$$T_p\dom(\fb_1)\!\oplus\!T_q\dom(\ga_2)\lra T_{\fb_1(p)}Y\!=\!T_{\ga_2(q)}Y, \qquad
(v,w)\lra \nd_p\fb_1(v)\!+\!\nd_q\ga_2(w).$$
The \sf{linking number}~\eref{lkdfn_e} of the one-cycles~$\ga_1$ and~$\ga_2$
that bound in~$Y$ does not depend on the choice of~$\fb_1,\fb_2$.
In this section, we take linking numbers of the boundaries~$\prt u$ of
$J$-holomorphic maps~$u$ from $(\D^2,S^1)$ to~$(X,Y)$.
By the injectivity of~\eref{iohomdfn_e}, these boundaries also bound in~$Y$
and thus have well-defined linking numbers.

\subsection{Bounding chains and Welschinger's invariants}
\label{BCtoWel_subs}

For $\be_1,\ldots,\be_m\!\in\!H_2^{\om}(X,Y)$, 
we denote by $\M^\uo_{K,L}(\be_1,\ldots,\be_m;J)$ 
the moduli space of unions of~$m$ $J$-holomorphic disks 
in classes $\be_1,\ldots,\be_m$ with
$L$-labeled interior marked points and $K$-labeled boundary marked points
between the $m$~disks.
In contrast to Section~2.4 in~\cite{Wel13}, we do {\it not} order the disks or
orient this moduli space. 
Let
\BE{Mmultidfn_e}\M^{\uo,\circ}_{K,L}(\be_1,\ldots,\be_m;J)\subset \M^\uo_{K,L}(\be_1,\ldots,\be_m;J)\EE
be the dense open subset of the multi-disks whose $m$ components have pairwise disjoint 
boundaries in~$Y$. 
We extend the definitions of the evaluations maps~$\evb_i$ and~$\evi_i$
and of the associated fiber product~$\fiber$ of Section~\ref{Ms_subs}
to the moduli spaces in~\eref{Mmultidfn_e}.

Let $\al\!\equiv\!(\be,K,L)$ and $J$ be as in Theorem~\ref{WelOpen_thm}
and $p_1,\ldots,p_k$ be an ordering of the elements of~$K$.
For any element $\al'\!\equiv\!(\be',K',L')$ of $\cC_{\om;\al}(Y)\!\cup\!\{\al\}$, 
we endow $K'\!\subset\!K$ with the order induced from~$K$.
We define the spaces of (constrained) \sf{single $\al'$-disks} and \sf{$\al'$-multi-disks} by
\begin{equation*}\begin{split}
\SDC(\al')&\equiv\M^\uo_{K',L'}(\be';J)\!\!\fiber\!\!
\big(\!(i,p_i)_{i\in K'};(i,\Ga_i)_{\Ga_i\in L'}\big)
\qquad\hbox{and}\\
\MDC(\al')&\equiv \bigsqcup_{m=1}^{\i}
\bigsqcup_{\begin{subarray}{c}
\be_1,\ldots,\be_m\in H_2^{\om}(X,Y)\\ \tn{unordered}\\ \be_1+\ldots+\be_m=\be'\end{subarray}} 
\hspace{-.4in}
\M^{\uo,\circ}_{K',L'}(\be_1,\ldots,\be_m;J)\!\!\fiber\!\!
\big(\!(i,p_i)_{i\in K'};(i,\Ga_i)_{\Ga_i\in L'}\big),
\end{split}\end{equation*}
respectively.
We write an element $\u$ of $\MDC(\al')$ as
\begin{gather}\label{uMDCdfn_e} 
\u\equiv\big[\u_1,\ldots,\u_m\big] \quad\hbox{with}\quad
\u_r\in \M^\uo_{K_r,L_r}(\be_r;J)\!\!
\fiber\!\!\big(\!(i,p_i)_{i\in K_r};(i,\Ga_i)_{\Ga_i\in L_r}\big)\\
\notag 
\hbox{for some}\quad m,\be_r,K_r,L_r~~\hbox{with}~~
\be_1\!+\!\ldots\!+\!\be_m=\be',~~\bigsqcup_{r=1}^m\!K_r=K',~~\bigsqcup_{r=1}^m\!L_r=L'\,.
\end{gather}
For such an element $\u$ of $\MDC(\al')$, we write 
$\u_r\!\in\!\u$ to indicate that~$\u_r$ is a component of the multi-disk~$\u$.
Let 
$$\prt\u: \unbr{S^1\!\sqcup\!\ldots\!\sqcup\!S^1}{m} \lra Y$$
be the boundary of the components of $\u$ with the orientation induced by 
the complex orientation on the unit disk.
If $\dim(\al')\!=\!0$ and $\u_r\!\in\!\u$, 
we denote by $\sgn(\u_r)$ the sign of $\u_r$
as an element of the fiber product in~\eref{uMDCdfn_e} and set
$$\sgn(\u)\equiv\prod_{\u_r\in\u}\!\!\sgn(\u_r)\,;$$
this sign does not depend on the order on~$K$.
If $\dim(\al')\!\neq\!0$, we define $\sgn(\u)\!\equiv\!0$.

For $\u\!\in\!\MDC(\al')$ as in~\eref{uMDCdfn_e}, 
we denote by~$K_\u$ the complete graph with vertices~$\u_1,\ldots,\u_m$.
We call a tree $T\!\subset\!K_{\u}$, i.e.~a connected subgraph without loops,
\sf{spanning} if~$T$ contains all vertices of~$K_{\u}$
and denote by $\ST(\u)$ the set of all spanning trees $T\!\subset\!K_{\u}$.
Let 
$$\lk(\u;T)\equiv
\prod_{\begin{subarray}{c}\tn{edge}\,e\in T\\
\tn{connecting}\,\u_r,\u_s\end{subarray}}
\hspace{-.35in}\lk\big(\prt\u_r,\prt\u_s\big)~~\forall\,T\!\in\!\ST(\u)
\quad\hbox{and}\quad 
\lk(\u)\equiv\sum_{T\in\ST(\u)}\!\!\!\!\!\!\lk(\u;T)\,.$$

\vspace{-.1in}

Welschinger's definition of the open GW-invariant~\eref{Welinvdfn_e} in 
\cite[Sect.~4.1]{Wel13} is equivalent~to
\BE{Welinvdfn_e0}
\bllrr{\Ga_1,\ldots,\Ga_l}_{\be,|K|}^{\om,\os}
=\sum_{\u\in\MDC(\al)}\hspace{-.15in}\sgn(\u)\lk(\u)\,.\EE
The first statement of the next proposition establishes~\ref{WelBC_it}.
We deduce~\ref{Welgen_it} from the second statement of this proposition
and Proposition~\ref{RDivRel_prp}.
The two propositions are proved in Sections~\ref{MainPf_subs} and~\ref{OpenDivRel_subs}.

\begin{prp}\label{WelOpen_prp}
Let $\al\!\in\!\cC_{\om}(Y)$ and $J$ be as in Theorem~\ref{WelOpen_thm}.
\BEnum{(\arabic*)}

\item  There exists a bounding chain $(\fb_{\al'})_{\al'\in\cC_{\om;\al}(Y)}$
on~$(\al,J)$.

\item For every such bounding chain~$(\fb_{\al'})_{\al'\in\cC_{\om;\al}(Y)}$
and $\al'\!\in\!\cC_{\om;\al}(Y)$ with $\dim(\al')\!=\!0$,
the associated closed pseudocycle~\eref{fbbdfn_e} satisfies
\BE{fbbalWel_e}
\prt\fb_{\al'}=\fbb_{\al'}
=(-1)^{|K(\al')|}\!\!\!\!\!\!\!
\bigsqcup_{\u\in\MDC(\al')}\!\!\!\!\!\!\sgn(\u)\lk(\u)\prt\u\,.\EE
\EEnum
\end{prp}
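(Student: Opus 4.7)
The plan is to prove parts~(1) and~(2) simultaneously by induction on the partially ordered set $\cC_{\om;\al}(Y)$. The minimal elements $(0,\{p\},\eset)$ and $(0,\eset,\{\Ga\})$ are handled directly by BC3 and BC2, respectively; at these elements $\cD_{\om}$ is either empty or contains only the trivial tuple, so~\eref{fbbalWel_e} reduces to a tautology in the $\dim\!=\!0$ cases.

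For the inductive step of part~(1), if $\dim(\al')\!\ne\!0$ set $\fb_{\al'}\!=\!\eset$ per BC2. If $\dim(\al')\!=\!0$, Lemma~3.1 of~\cite{JakeSaraGeom} (applied with the previously constructed $\fb_{\al''}$ for $\al''\!\prec\!\al'$) ensures that $\fbb_{\al'}$ is a closed $1$-dimensional pseudocycle to~$Y$. This pseudocycle bounds in $X$: it is assembled from evaluation maps $\evb_1(u_0)$ for stable $J$-holomorphic disks~$u_0$, and the disk interiors fill in a bordered pseudocycle to~$X$ extending $\io_Y\!\circ\!\fbb_{\al'}$. Hence $\io_{Y*}[\fbb_{\al'}]\!=\!0$ in $H_1(X;R)$, and the injectivity of~\eref{iohomdfn_e} gives $[\fbb_{\al'}]\!=\!0$ in $H_1(Y;R)$. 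Theorem~1.1 of~\cite{pseudo} then supplies a bordered pseudocycle $\fb_{\al'}$ to~$Y$ of dimension~$2$ with $\prt\fb_{\al'}\!=\!\fbb_{\al'}$, closing the induction for~(1).

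For part~(2), the first equality in~\eref{fbbalWel_e} is BC4. For the second equality, expand $\fbb_{\al'}\!=\!\sum_{\eta\in\cD_{\om}(\al')}\!\fbb_{\eta}$ using~\eref{fbbetadfn_e}, and for each $\eta$ with $k_{\bu}(\eta)\!\ge\!1$ apply the Open Divisor Relation (Proposition~\ref{RDivRel_prp}) to each fiber product factor with $\fb_{\al_i(\eta)}$: since $\fb_{\al_i(\eta)}$ is a $2$-dimensional bordered pseudocycle in the $3$-manifold~$Y$, this is a real codimension~$1$ bordered insertion that the Open Divisor Relation converts into a linking number between the boundary of the root disk $\prt u_0$ and $\fbb_{\al_i(\eta)}$. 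Applying the inductive hypothesis for~(2) to each $\fbb_{\al_i(\eta)}$ replaces it with $(-1)^{|K(\al_i(\eta))|}$ times a signed sum over multi-disks $\u^{(i)}\!\in\!\MDC(\al_i(\eta))$ weighted by $\sgn(\u^{(i)})\lk(\u^{(i)})\prt\u^{(i)}$ over internal spanning trees. Substituting, each resulting term is indexed by: a root disk $u_0$ contributing the $\evb_1$ point; and, for each $i\!\in\![k_{\bu}(\eta)]$, a multi-disk $\u^{(i)}$ with a spanning tree and a distinguished component whose boundary links to $\prt u_0$ at the $(i\!+\!1)$st boundary marked point. Reassembling $\u\!=\!\{u_0\}\!\sqcup\!\u^{(1)}\!\sqcup\!\cdots\!\sqcup\!\u^{(k_{\bu}(\eta))}\!\in\!\MDC(\al')$ and combining the root-to-subtree incidences with the inherited sub-spanning-trees yields a spanning tree $T\!\in\!\ST(\u)$ rooted at~$u_0$. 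This produces a bijection between the terms on the left of~\eref{fbbalWel_e} and triples (multi-disk~$\u$, spanning tree~$T$, marked root component), matching the right side because $\prt\u$ on that side is the disjoint union over all components of~$\u$.

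The main obstacle is the sign analysis. The factor $(-1)^{k_{\bu}(\eta)}$ in~\eref{fbbetadfn_e}, the signs produced by each application of the Open Divisor Relation, and the inductive factors $(-1)^{|K(\al_i(\eta))|}$ must telescope over the spanning tree down to the single overall sign $(-1)^{|K(\al')|}$, using the partition identity $\sum_{i}|K(\al_i(\eta))|\!=\!|K(\al')|$; simultaneously, the products of component orientations of nested fiber products must match $\sgn(\u)$. This requires careful application of Lemmas~\ref{fibersign_lmm}--\ref{fibprodisom_lmm1}; the orientation conventions for moduli spaces in Section~\ref{Ms_subs} and for fiber products in Section~\ref{Fp_subs} are designed to make this telescoping routine. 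Once the signs are matched for the root-expansion step, the inductive identity for~(2) closes automatically.
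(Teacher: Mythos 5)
Your overall strategy matches the paper's: simultaneous induction on $\cC_{\om;\al}(Y)$, use of Lemma~3.1 of \cite{JakeSaraGeom} to know $\fbb_{\al'}$ is a closed pseudocycle, the Open Divisor Relation to trade bounded insertions for linking numbers, the inductive hypothesis on the sub-chains $\fb_{\al_i(\eta)}$, and a spanning-tree reassembly (this is precisely the bijection~\eref{DMDCmap_e} between triples $(\u,\u_\bu,T)$ and nested data). That said, there is a logical gap in your treatment of part~(1), and two smaller imprecisions.

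The gap: within the inductive step you try to establish~(1) for $\al'$ before~(2) for $\al'$, by asserting directly that ``the disk interiors fill in a bordered pseudocycle to $X$.'' This does not work as stated. Term by term, $\fbb_\eta$ is the image of $\evb_1$ on a one-dimensional fiber product over the \emph{ordered} space $\M^+_{\eta;J}$; as the free boundary marked point $x_1$ varies it only sweeps an \emph{arc} of $\prt u$, because it must stay in the arc determined by the ordered positions of the constrained boundary marked points. It is only after passing to the unordered moduli space (equation~\eref{WelOpen_e9}) and summing over equivalence classes $[\eta]$ that these arcs assemble into whole circles $\prt\u_\bu$. That is exactly what the second equality of~\eref{fbbalWel_e} asserts, and it is precisely what makes the ``disks fill in'' argument available. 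The correct order of the induction is therefore: from $(1)$ and $(2)$ at lower levels, derive~(2) at level~$\al'$ via~\eref{WelOpen_e9}, the Open Divisor Relation, and the bijection~\eref{DMDCmap_e}; then deduce~(1) at level~$\al'$ because each $\prt\u$ is the boundary of a disk map to $(X,Y)$, hence null-homologous in $X$, hence null-homologous in $Y$ by the injectivity of~\eref{iohomdfn_e}, and finally invoke Theorem~1.1 of \cite{pseudo}.

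Two smaller points. First, the Open Divisor Relation should be applied only to the indices $i\in K^*_\bu(\eta)$, i.e.\ to those $\al_i(\eta)$ with $\dim(\al_i(\eta))=0$; the point insertions $\fb_{(0,\{\pt\},\eset)}$ have codimension $3$, not $1$, and cannot be traded for linking numbers (the paper takes $K'=K^*_\bu(\eta)$, and the factor $(-1)^{|K^\pt_\bu(\eta)|}$ appearing in~\eref{WelOpen_e11} records the remaining point constraints). Second, you flag but do not carry out the sign bookkeeping, which in the paper telescopes via~\eref{WelOpen_e9}, Lemma~\ref{fibprodflip_lmm}, Lemma~\ref{fibprodisom_lmm1}, and the partition identity $|K^\pt_\bu(\eta)|+\sum_{i\in K^*_\bu(\eta)}|K(\al_i(\eta))|=|K(\al')|$; this is the part where the conventions of Sections~\ref{Fp_subs} and~\ref{Ms_subs} do real work and where care is needed.
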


\vspace{.1in}

If $k\!\in\!\Z^+$ and $K\!\subset\![k]$, let
$$\ff^b_k\!:\M^{\uo,\st}_{k,l}(\be;J)\lra\M^{\uo,\st}_{k-1,l}(\be;J) \quad\hbox{and}\quad 
\ff^b_{k;K}\!:
\M^{\uo,\st}_{k,l}(\be;J)\lra\M^{\uo,\st}_{[k]-K,l}(\be;J)$$
be the forgetful morphisms dropping the boundary marked point with index~$k$
and the boundary marked points indexed by the set~$K$,
respectively.

\begin{prp}[Open Divisor Relation]\label{RDivRel_prp}
Suppose $K'\!\subset\!K\!\subset\![k]$, $L\!\subset\![l]$, and 
$(\fb_i)_{i\in K}$ and $(\Ga_i)_{i\in L}$ are
tuples of bordered pseudocycles into~$Y$ and~$X$, respectively.
If the codimension of~$\fb_i$ is~1 for every $i\!\in\!K'$ and
$$K'\subset\big\{k',\ldots,k\big\}\subset K$$
for $k'\!\in\![k]$,
then there exists a dense open subset $\M_{[k]-K',l}^*$ of 
the target of the induced forgetful morphism
\BE{fk2_e}\begin{split}
&\ff^b_{k;K'}\!:
\M^{\uo,\st}_{k,l}(\be;J)\!\!\fiber\!\!\big(\!(i,\fb_i)_{i\in K};(i,\Ga_i)_{i\in L}\big)\\
&\hspace{2in}
\lra\M^{\uo,\st}_{[k]-K',l}(\be;J)\!\!\fiber\!\!
\big(\!(i,\fb_i)_{i\in K-K'};(i,\Ga_i)_{i\in L}\big)
\end{split}\EE
so that \eref{fk2_e} restricts to a covering map over each connected component~$\M$
of~$\M_{[k]-K',l}^*$.
If in addition the codimensions of all~$\fb_i$ are odd and the codimensions of~$\Ga_i$ are even,
then 
$$\deg\big(\ff^b_{k;K'}\big|_{\M}\big)=(-1)^{|K'|}
\!\prod_{i\in K'}\!\lk(\prt\u,\prt\fb_i)$$ 
for any $\u\!\in\!\M$. 
\end{prp}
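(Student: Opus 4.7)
The plan is to proceed by induction on~$|K'|$, reducing to a single-drop computation that identifies the fiber of $\ff^b_{k;\{i_0\}}$ with an intersection in~$Y$ between the boundary of a disk and a codimension-$1$ pseudocycle, whose signed count is a linking number by~\eref{lkdfn_e}.

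For $|K'|\!=\!0$ the statement is trivial. For the inductive step, factor
$$\ff^b_{k;K'}=\ff^b_{[k]-\{i_0\};K'-\{i_0\}}\circ\ff^b_{k;\{i_0\}}$$
with $i_0\!\in\!K'$, and apply the inductive hypothesis to the outer factor. It then suffices to prove the one-step case $K'\!=\!\{i_0\}$: that $\ff^b_{k;\{i_0\}}$ restricts to a covering map of degree $|\prt\u\!\fiber\!\fb_{i_0}|^{\pm}=-\lk(\prt\u,\prt\fb_{i_0})$ over a dense open subset of its target. The hypothesis $K'\!\subset\!\{k',\ldots,k\}$ allows me to pick $i_0$ at each inductive step to be the current maximum of the remaining~$K'$, ensuring that the index-set identifications required by the orientation convention for~$\M^{\uo,\st}_{\bullet,\bullet}(\be;J)$ are order-preserving.

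For the one-step case, the fiber of $\ff^b_{k;\{i_0\}}$ over a generic $[\u']$ in the target is in bijection with the pairs $(x_{i_0},q)$ where $x_{i_0}\!\in\!\prt\u'$ and $q\!\in\!\dom\fb_{i_0}$ satisfy $\u'(x_{i_0})\!=\!\fb_{i_0}(q)$; that is, it is the $0$-dimensional fiber product $\prt\u'\!\fiber\!\fb_{i_0}$, using $\dim Y\!=\!3$ and $\codim\fb_{i_0}\!=\!1$. Its signed cardinality is $|\prt\u'\!\fiber\!\fb_{i_0}|^{\pm}$, which by~\eref{lkdfn_e} equals $-\lk(\prt\u',\prt\fb_{i_0})$. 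The dense open subset $\M^*_{[k]-K',l}$ can be taken to be the set of $[\u']$ such that $\prt\u'$ is an embedded loop in~$Y$ disjoint from all nodes and from the limit sets of the~$\fb_i$ ($i\!\in\!K'$) and~$\Ga_i$ ($i\!\in\!L$), and transverse to each~$\fb_i$ for $i\!\in\!K'$ — a dense open condition by the genericity of~$J$ and the pseudocycle properties of the~$\fb_i$ and~$\Ga_i$.

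The main obstacle is the sign bookkeeping. The orientation on $\M^{\uo,\st}_{k,l}(\be;J)$ carries the twist~$(-1)^k$ from~\eref{cml_e} at each forgetful step, and taking fiber products with the~$\fb_i$ and~$\Ga_i$ introduces further signs via Lemmas~\ref{fibersign_lmm}, \ref{fibprodflip_lmm}, and~\ref{fibprodisom_lmm1}. The orientation of the fiber of $\ff^b_{k;\{i_0\}}$ induced by these conventions must be shown to coincide with the intrinsic fiber-product orientation on $\prt\u'\!\fiber\!\fb_{i_0}$. The hypotheses that every~$\fb_i$ has odd codimension and every~$\Ga_i$ has even codimension are precisely what is needed for these combinatorial signs to cancel to~$+1$. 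Once this is verified, multiplying the one-step degrees over $i\!\in\!K'$ gives
$$\deg\bigl(\ff^b_{k;K'}\big|_\M\bigr)=\prod_{i\in K'}\bigl(-\lk(\prt\u,\prt\fb_i)\bigr)=(-1)^{|K'|}\prod_{i\in K'}\lk(\prt\u,\prt\fb_i),$$
as asserted.
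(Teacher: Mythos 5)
Your overall strategy — reduce to a one-step lemma for a single codimension-$1$ drop and induct on~$|K'|$ — is exactly what the paper does; its Lemma~\ref{RDivRel_lmm} is the $K'\!=\!\{k\}$ case of the proposition, and the proposition is deduced from it by induction. Your identification of the generic fiber of $\ff^b_{k;\{i_0\}}$ with $\prt\u'\!\fiber\!\fb_{i_0}$, and the observation that $|\prt\u'\!\fiber\!\fb_{i_0}|^{\pm}\!=\!-\lk(\prt\u',\prt\fb_{i_0})$ by~\eref{lkdfn_e}, are also correct and match the paper.

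The gap is that you defer the one thing that actually needs proof. You write that the hypotheses ``are precisely what is needed for these combinatorial signs to cancel to~$+1$.\ Once this is verified, \ldots'' — but that verification \emph{is} the proof of Lemma~\ref{RDivRel_lmm}. The paper establishes it through two commuting diagrams of short exact sequences (Figure~\ref{Welcomplmm2_fig}): it first shows, via Lemma~\ref{fibprodisom_lmm1}, that the domain of~\eref{fk_e} equals $(-1)^{|K|-1}(\wt M_{\evb_k}\!\times_{\fb_k}\!(\dom\fb_k))$, then chases the first diagram to pin down the sign of the forgetful map on moduli and constraint spaces (the~$(-1)^{k-1}$ from~\eref{cml_e} enters here), and chases the second diagram to show that the sign of $\nd\ff^b_k$ at a preimage is $(-1)^{|K|-1}$ times the sign of the corresponding point in $(\prt\u)\!\fiber\!\fb_k$, using the parity bookkeeping~\eref{Welcomp2_e5}. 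None of this is automatic; asserting that the odd-codimension/even-codimension hypotheses make everything cancel, without carrying out the computation, leaves the central claim unjustified.

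There is also a subtler mismatch in the induction. The paper's one-step lemma drops only the \emph{top} index~$k$, because the orientation twist~$(-1)^k$ in~\eref{cml_e} is tied to that choice; to reduce the general $K'\!\subset\!\{k',\ldots,k\}$ to this lemma, the paper first reorders the $\fb_i$ via Lemma~\ref{fibprodflip_lmm} so that $K'\!=\!\{k\!-\!|K'|\!+\!1,\ldots,k\}$ and then peels off the top index at each step. You instead pick $i_0\!=\!\max K'$ at each step, which need not equal~$k$; this requires either a one-step lemma for an arbitrary drop position (whose sign would \emph{not} be independent of~$i_0$ without extra normalization) or the same reordering preprocessing the paper does. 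Your phrase ``ensuring that the index-set identifications \ldots are order-preserving'' gestures at this but does not resolve it — a one-step drop at position $i_0 < k$ is a genuinely different sign computation, and you should either carry it out or reduce to the top-index case first as the paper does.
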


\begin{rmk}\label{WelReal_rmk}
Suppose that $\phi$ is an anti-symplectic involution on~$(X,\om)$,
i.e.~$\phi^2\!=\!\id_X$ and $\phi^*\om\!=\!-\om$, 
$Y\!\subset\!X$ is a topological component of
the fixed locus~$X^{\phi}$ of~$\phi$, $\phi^*J\!=\!-J$, and for every $i\!\in\![l]$
there exists a diffeomorphism~$\phi_{\Ga_i}$ of~$\dom\,\Ga_i$ such~that 
$$\phi\!\circ\!\Ga_i=\Ga_i\!\circ\!\phi_{\Ga_i} \qquad\hbox{and}\qquad
\sgn\,\phi_{\Ga_i}=-(-1)^{(\dim\,\Ga_i)/2}\,.$$
Let $H_{2;\phi}(X,Y)$ be the quotient of $H_2(X,Y;\Z)$ by the image of 
the endomorphism $\{\Id\!+\!\phi_*\}$.
For $B\!\in\!H_{2;\phi}(X,Y)$, let 
$$\SDC(B,K,L)= 
\bigsqcup_{\begin{subarray}{c}\al'\in\cC_{\om}(Y), \be(\al')\in B\\
K(\al')=K,L(\al')=L\end{subarray}}\hspace{-.35in}\SDC(\al')\,, \qquad
\MDC(B,K,L)= 
\bigsqcup_{\begin{subarray}{c}\al'\in\cC_{\om}(Y), \be(\al')\in B\\
K(\al')=K,L(\al')=L\end{subarray}}\hspace{-.35in}\MDC(\al')\,.$$
If $\fc$ denotes the complex conjugation on~$\D^2$,
the replacement of $\u_r\!\in\!\u$ as in~\eref{uMDCdfn_e} with
\begin{equation*}\begin{split}
&\u_r'\!\equiv\!
\big(\big[\phi\!\circ\!u_r\!\circ\!\fc,(x_i)_{i\in K_r},(\fc(z_i)\!)_{\Ga_i\in L_r}\big],
(i,p_i)_{i\in K_r};(i,\phi_{\Ga_i}(q_i)\!)_{\Ga_i\in L_r}\big)\\
&\hspace{2in}\in \M^\uo_{K_r,L_r}(-\phi_*(\be_r);J)\!\!
\fiber\!\!\big(\!(i,p_i)_{i\in K_r};(i,\Ga_i)_{\Ga_i\in L_r}\big)
\end{split}\end{equation*}
preserves $\MDC(B,K,L)$.
If $\u'\!\in\!\MDC(B,K,L)$ is the resulting element, $\sgn(\u')\!=\!\sgn(\u)$
by \cite[Prop.~5.1]{Jake}; see also \cite[Lem.~B.7]{JakeSaraGeom}.
However, $\prt\u_r$ and $\prt\u_r'$ are the same circles with the opposite orientations.
If precisely one edge of a tree $T\!\in\!\ST(\u)$ contains $\u_r\!\in\!\u$ as a vertex,
this implies that $\lk(\u;T)\!=\!-\lk(\u';T')$, where $T'$ is the tree obtained from~$T$
by replacing the vertex~$\u_r$ with~$\u_r'$.
It follows that the collection
$$\big\{(\u,T)\!: \u\!\in\!\MDC(B,K,L)\!-\!\SDC(B,K,L),\,T\!\in\!\ST(\u)\big\}$$
can be split into pairs of elements with opposite values of $\sgn(\u)\lk(\u;T)$. 
Thus,
$$\sum_{\be\in B}\!\!
\bllrr{\Ga_1,\ldots,\Ga_l}_{\be,|K|}^{\om,\os}
\equiv\sum_{\u\in\MDC(B,K,L)}\sum_{T\in\ST(\u)}\!\!\!\!\!\!\sgn(\u)\lk(\u;T)
=\sum_{\u\in\SDC(B,K,L)}\hspace{-.25in}\sgn(\u)\,.$$
The right-hand side above is the GW-invariant of $(X,\om,\phi)$ 
enumerating degree~$B$ rational $J$-holomorphic curves through the specified 
constraints as defined in~\cite{RealWDVV3}.
This invariant is a re-interpretation of the invariants defined
in~\cite{Wel6,Wel6b,Jake}; see also \cite[(B.12)]{JakeSaraGeom}. 
\end{rmk}

\subsection{Main argument}
\label{MainPf_subs}

We continue with the setting of Theorem~\ref{WelOpen_thm} and Proposition~\ref{WelOpen_prp}.
Let \hbox{$\al'\!\in\!\cC_{\om;\al}(Y)\!\cup\!\{\al\}$}.
For $\eta\!\in\!\cD_\om(\al')$, define 
\begin{gather*}
K^*_\bu(\eta)\equiv\big\{i\!\in\![k_\bu(\eta)]\!:
\al_i(\eta)\!\neq\!(0,\{\pt\},\eset)~\forall\,\pt\!\in\!K\big\}, \\
K^\pt_\bu(\eta)\equiv\big\{\pt\!\in\!K\!:
(0,\{\pt\},\eset)\!=\!\al_i(\eta)~\tn{for some}~i\!\in\![k_{\bu}(\eta)]\big\},\\
\al_{\bu}^{\pt}(\eta)\equiv\big(\be_\bu(\eta),K^\pt_\bu(\eta),L_\bu(\eta)\!\big)
\in\cC_{\om;\al}(Y)\!\cup\!\{\al\}, \quad
\M^{\uo,+}_{\eta;J}=\M^{\uo,\st}_{[k_\bu(\eta)+1],L_\bu(\eta)}\big(\be_\bu(\eta);J\big).
\end{gather*}
For $\eta,\eta'\!\in\!\cD_\om(\al')$, define $\eta\!\sim\!\eta'$ if 
\begin{gather*}
\big(\be_{\bu}(\eta),k_\bu(\eta),L_\bu(\eta)\!\big)=
\big(\be_{\bu}(\eta'),k_\bu(\eta'),L_\bu(\eta')\big)
\qquad\hbox{and}\\
\big(\al_i(\eta)\!\big)_{i\in[k_\bu(\eta)]}\hbox{ is a permutation of }
\big(\al_i(\eta')\!\big)_{i\in[k_\bu(\eta)]}.
\end{gather*}
Denote by $[\eta]$ the equivalence class of~$\eta$.
With $\fbb_{\eta}$ as in~\eref{fbbetadfn_e}, let
$$\fbb_{[\eta]}=\bigsqcup_{\eta'\in[\eta]}\!\!\!\fbb_{\eta'}\,.$$ 

\vspace{-.1in}

We define
\begin{gather}\notag
\DMDC(\al')\equiv\big\{(\u,\u_{\bu},T)\!:\u\!\in\!\MDC(\al'),\,\u_{\bu}\!\in\!\u,\,
T\!\in\!\ST(\u)\big\},\\ 
\label{ovDMDCdfn_e0}
\wt\DMDC(\al')\equiv
\Big\{\!
\big(\eta,\u_{\bu},(\wt\u_i)_{i\in K_{\bu}^*(\eta)}\big)\!:
\eta\!\in\!\cD_{\om}(\al'),\,\u_{\bu}\!\in\!\SDC(\al_{\bu}^{\pt}(\eta)\!),
\wt\u_i\!\in\!\DMDC(\al_i(\eta)\!)~
\forall\,i\!\in\!K_{\bu}^*(\eta)\Big\}.
\end{gather}
We define elements $(\eta,\u_{\bu},(\wt\u_i)_{i\in K_{\bu}^*(\eta)})$ and 
$(\eta',\u_{\bu}',(\wt\u_i')_{i\in K_{\bu}^*(\eta')})$ of the last space to be
\sf{equivalent}
if $\u_{\bu}\!=\!\u_{\bu}'$, $k_{\bu}(\eta)\!=\!k_{\bu}(\eta')$,
and there exists a permutation~$\si$ of $[k_{\bu}(\eta)]$
such~that 
$$\al_i(\eta)=\al_{\si(i)}(\eta')~~\forall\,i\!\in\!\big[k_{\bu}(\eta)\big], \quad
\si\big(K_{\bu}^*(\eta)\!\big)\subset K_{\bu}^*(\eta'),
\quad\hbox{and}\quad \wt\u_i=\wt\u_{\si(i)}'~~\forall\,i\!\in\!K_{\bu}^*(\eta).$$
We denote by $\ov{\DMDC}(\al')$ the quotient of 
the space in~\eref{ovDMDCdfn_e0} by this equivalence relation.

Let $(\u,\u_{\bu},T)\!\in\!\DMDC(\al')$. 
For each $\u_r\!\in\!\u$, let 
$$\be(\u_r)\in H_2^{\om}(X,Y), \qquad L(\u_r)\!\subset\!L(\al'),
\quad\hbox{and}\quad K(\u_r)\!\subset\!K(\al')$$
be the degree of $\u_r$, the interior marked points carried by~$\u_r$,
and the boundary marked points carried by~$\u_r$, respectively.
We denote by $\Br(\u_{\bu};T)$ the set of branches of~$T$ at~$\u_{\bu}$,
i.e.~the trees~$T_i$ obtained by removing the vertex~$\u_{\bu}$ from the graph~$T$.
For each $i\!\in\!\Br(u_{\bu};T)$, we denote by~$\u_i'$ the set of all vertices of~$T_i$
and by $\u_{i\bu}'\!\in\!\u_i'$ the vertex connected by an edge of~$T$ to~$\u_{\bu}$. 
Define
$$\be_i=\sum_{\u_r\in\u_i'}\!\!\!\be(\u_r), \quad K_i=\bigsqcup_{\u_r\in\u_i'}\!\!\!K(\u_r),
\quad L_i=\bigsqcup_{\u_r\in\u_i'}\!\!\!L(\u_r),
\quad \al_i=(\be_i,K_i,L_i)\in\cC_{\om;\al}(Y).$$
Let $\al_{\pt}\!=\!(0,\{\pt\},\eset)$ for each $\pt\!\in\!K(\u_{\bu})$ and
$$k_{\bu}=\big|K(\u_{\bu})\big|\!+\!\big|\Br(\u_{\bu};T)\big|.$$
Identifying $K(\u_{\bu})\!\sqcup\!\Br(\u_{\bu};T)$ with $[k_{\bu}]$,
we obtain an element
$$\big( \eta\!\equiv\!\big(\be(\u_{\bu}),k_{\bu},L(\u_{\bu}),(\al_i)_{i\in[k_{\bu}]}\big),
\u_{\bu},(\u_i',\u_{i\bu}',T_i)_{i\in\Br(\u_{\bu};T)}\big)\in \wt\DMDC(\al').$$
The induced element of $\ov{\DMDC}(\al')$ does not depend on the choice of this identification.
In this way, we obtain a natural bijection
\BE{DMDCmap_e} \DMDC(\al')\lra\ov\DMDC(\al').\EE

\vspace{-.1in}

For $k\!\in\!\Z^{\ge0}$, we denote by $\bS_k$ the $k$-th symmetric group.
For $\si\!\in\!\bS_{k_\bu(\eta)}$, define 
\begin{equation*}\begin{split}
\io_{\eta;\si}^+\!:\M^+_{\eta;J}&\lra\M^{\uo,+}_{\eta;J},\\
\io_{\eta;\si}^+\big(u;x_1,x_2,\ldots,x_{k_\bu(\eta)+1},(z_i)_{\Ga_i\in L_\bu(\eta)}\big)
&=\big(u;x_1,x_{1+\si(1)},\ldots,x_{1+\si(k_\bu(\eta))},(z_i)_{\Ga_i\in L_\bu(\eta)}\big).
\end{split}\end{equation*}
This map is an open embedding and 
$$\M^{\uo,+}_{\eta;J}
=\bigsqcup_{\si\in\bS_{k_\bu(\eta)}}\!\!\!\!\!(\sgn\,\si)\big(\Im\,\io_{\eta;\si}^+\big).$$
If $\eta\!\sim\!\eta'$ are such that $\al_i(\eta)=\al_{\si(i)}(\eta')$ 
for all $i\in[k_\bu(\eta)]$,  then 
$$(-1)^{k_\bu(\eta)}\fbb_{\eta'}
\approx\Big(\!\evb_1\!:(\sgn\,\si)\big(\Im\,\io_{\eta;\si}^+\big)
\!\!\fiber\!\!\big(\!(i\!+\!1,\fb_{\al_i(\eta)})_{i\in[k_\bu(\eta)]};
(i,\Ga_i)_{\Ga_i\in L_\bu(\eta)}\big)
\lra Y\!\Big)$$
by Lemma~\ref{fibprodflip_lmm}.
Therefore, 
\BE{WelOpen_e9}
\fbb_{[\eta]}\approx(-1)^{k_\bu(\eta)}\!\Big(\!\evb_1\!:
\M^{\uo,+}_{\eta;J}\!\!\fiber\!\!\big(\!(i\!+\!1,\fb_{\al_i(\eta)})_{i\in[k_\bu(\eta)]};
(i,\Ga_i)_{\Ga_i\in L_\bu(\eta)}\big)\lra Y\!\Big).\EE

\begin{proof}[{\bf{\emph{Proof of~\ref{Welgen_it}}}}]
We establish this statement with $K$ replaced by~$K\!-\!\{p_1\}$ under 
the assumption that $\dim(\al)\!=\!0$.

Let $\al'$ and $\eta$ be as above with $1\!\not\in\!K(\al')$.
With $\fp_1\!\equiv\!(0,\{p_1\},\eset)$, 
$$\fbb_{[\eta]}\!\fiber\!\fb_{\fp_1}
=\unbr{(-1)^{k_\bu(\eta)}}{{\tn{~\eref{WelOpen_e9}}}}
\unbr{(-1)^{k_\bu(\eta)}}{{\tn{~Lemma~\ref{fibprodflip_lmm}}}} 
\unbr{(-1)^{k_\bu(\eta)}}{\tn{~Lemma~\ref{fibprodisom_lmm1}}}
\M^{\uo,+}_{\eta;J}\!\!\fiber\!\!
\big(\!(1,\fb_{\fp_1}),(i\!+\!1,\fb_{\al_i(\eta)})_{i\in[k_\bu(\eta)]};
(i,\Ga_i)_{\Ga_i\in L_\bu(\eta)}\big).$$
Suppose in addition $\dim(\al')\!=\!2$. 
By the above identity, Proposition~\ref{RDivRel_prp} with \hbox{$K'\!=\!K_{\bu}^*(\eta)$}, 
and~\eref{fbbalWel_e}
with~$\al'$ replaced by~$\al_i(\eta)$,
\BE{WelOpen_e10}\begin{split}
\big|\fbb_{[\eta]}\!\fiber\!\fb_{\fp_1}\big|^{\pm}
&=(-1)^{|K^\pt_\bu(\eta)|}\hspace{-.35in}
\sum_{\u_{\bu}\in\SDC(\al_{\bu}^{\pt}(\eta)+\fp_1)}\hspace{-.25in}
\Big(\sgn(\u_{\bu})\hspace{-.15in}\prod_{i\in K^*_\bu(\eta)}\hspace{-.15in}
\lk(\prt\u_{\bu},\prt\fb_{\al_i(\eta)})\!\!\Big)\\
&=(-1)^{|K(\al')|}\hspace{-.35in}
\sum_{\u_{\bu}\in\SDC(\al_{\bu}^{\pt}(\eta)+\fp_1)}\hspace{-0.1in}
\bigg(\!\sgn(\u_{\bu})\hspace{-.15in}\prod_{i\in K^*_\bu(\eta)}
\hspace{-.1in}\Big(
\sum_{\begin{subarray}{c}\u_i\in\MDC(\al_i(\eta)\!)\\ 
T_i\in\ST(\u_i)\end{subarray}}\hspace{-.28in}
\sgn(\u_i)\lk(\u_i;T_i)\lk(\prt\u_{\bu},\prt\u_i)\!\!\Big)\!\!\!\bigg)\,.
\end{split}\EE
Since the dimension of~$Y$ is odd, 
$$-\deg\fbb_{\al'}=\big|\fbb_{\al'}\!\fiber\!\fb_{\fp_1}\big|^{\pm}
\equiv\sum_{[\eta]\in\cD_{\om}(\al')/\sim}
\hspace{-.25in}\big|\fbb_{[\eta]}\!\fiber\!\fb_{\fp_1}\big|^{\pm}
\,.$$
Summing up~\eref{WelOpen_e10} over the equivalence classes~$[\eta]$ of~$\eta$
in~$\cD_{\om}(\al')$
and using the bijectivity of the map~\eref{DMDCmap_e}, we thus obtain
$$-\deg\fbb_{\al'}=(-1)^{|K(\al')|}\hspace{-.3in}
\sum_{\begin{subarray}{c}\u\in\MDC(\al'+\fp_1)\\ 
\u_{\bu}\in\u,T\in\ST(\u)\\
\u_{\bu}~\tn{passes thr.}~p_1
\end{subarray}}\hspace{-.35in}\sgn(\u)\lk(\u;T)
=(-1)^{|K(\al')|}\hspace{-.25in}
\sum_{\u\in\MDC(\al'+\fp_1)}\hspace{-.25in}\sgn(\u)\lk(\u)\,.$$
Taking $\al'\!=\!(\be,K\!-\!\{p_1\},L)$ above and using~\eref{JSinvdfn_e2} 
and~\eref{Welinvdfn_e0}, we obtain
$$\blr{L}_{\be,K-\{p_1\}}^{\om,\os}
=(-1)^{|K|}\bllrr{\Ga_1,\ldots,\Ga_l}_{\be,|K|}^{\om,\os}$$ 
and establish the claim.
\end{proof}

\begin{proof}[{\bf{\emph{Proof of Proposition~\ref{WelOpen_prp}}}}]
We prove both statements by induction on the set~$\cC_{\om}(Y)$
with respect to the partial order~$\prec$ defined in Section~\ref{notation_subs}.
It is sufficient to consider 
the elements $\al'\!\in\!\cC_{\om}(Y)$ with $\dim(\al')\!=\!0$
only.

Suppose $\al\!\in\!\cC_{\om}(Y)$ with $\dim(\al)\!=\!0$
and $(\fb_{\al'})_{\al'\in\cC_{\om;\al}(Y)}$
is a collection of bordered pseudocycles into~$Y$ satisfying the conditions
of Definition~\ref{bndch_dfn} as well as the second equality in~\eref{fbbalWel_e}  
if $\dim(\al')\!=\!0$.
By~\eref{WelOpen_e9}, Proposition~\ref{RDivRel_prp}  with \hbox{$K'\!=\!K_{\bu}^*(\eta)$}, 
and~\eref{fbbalWel_e}
with~$\al'$ replaced by~$\al_i(\eta)$,
\BE{WelOpen_e11}\begin{split}
\fbb_{[\eta]}
&=(-1)^{|K^\pt_\bu(\eta)|}\hspace{-.25in}
\bigsqcup_{\u_{\bu}\in\SDC(\al_{\bu}^{\pt}(\eta))}\hspace{-.25in}
\Big(\sgn(\u_{\bu})\hspace{-.15in}\prod_{i\in K^*_\bu(\eta)}\hspace{-.15in}
\lk(\prt\u_{\bu},\prt\fb_{\al_i(\eta)})\!\!\Big)\prt\u_{\bu}\\
&=(-1)^{|K(\al)|}\hspace{-.25in}
\bigsqcup_{\u_{\bu}\in\SDC(\al_{\bu}^{\pt}(\eta))}\hspace{-0.1in}
\bigg(\!\sgn(\u_{\bu})\hspace{-.15in}\prod_{i\in K^*_\bu(\eta)}
\hspace{-.1in}\Big(
\sum_{\begin{subarray}{c}\u_i\in\MDC(\al_i(\eta)\!)\\ 
T_i\in\ST(\u_i)\end{subarray}}\hspace{-.28in}
\sgn(\u_i)\lk(\u_i;T_i)\lk(\prt\u_{\bu},\prt\u_i)\!\!\Big)\!\!\!\bigg)\prt\u_{\bu}\,.
\end{split}\EE
Summing up~\eref{WelOpen_e11} over the equivalence classes~$[\eta]$ 
of~$\eta$ in~$\cD_{\om}(\al)$
and using the bijectivity of the map~\eref{DMDCmap_e}, we obtain
\begin{equation*}\begin{split}
\fbb_{\al}\equiv \bigsqcup_{[\eta]\in\cD_{\om}(\al)/\sim}
\hspace{-.25in}\fbb_{[\eta]}
=(-1)^{|K(\al)|}\hspace{-.3in}
\bigsqcup_{\begin{subarray}{c}\u\in\MDC(\al)\\ 
\u_{\bu}\in\u,T\in\ST(\u)\end{subarray}}\hspace{-0.3in}\sgn(\u)\lk(\u;T)\prt\u_{\bu}
=(-1)^{|K(\al)|}\!\!\!\!\!\!\!
\bigsqcup_{\u\in\MDC(\al)}\!\!\!\!\!\!\sgn(\u)\lk(\u)\prt\u\,.
\end{split}\end{equation*}
This establishes the second equality in~\eref{fbbalWel_e} with~$\al'$ replaced by~$\al$.
Along with the injectivity of~\eref{iohomdfn_e}, it implies that~$\fbb_{\al}$
bounds in~$Y$.
Thus, we can choose a bordered pseudocycle~$\fb_{\al}$ into~$Y$
satisfying the first equality in~\eref{fbbalWel_e} with~$\al'$ replaced by~$\al$.
\end{proof}

\subsection{Open divisor relation}
\label{OpenDivRel_subs}

We deduce Proposition~\ref{RDivRel_prp} from the following lemma, 
which confirms the $K'\!=\!\{k\}$ case of this proposition.

\begin{lmm}\label{RDivRel_lmm}
Let $\be,k,l$, $K,L$, $(\fb_i)_{i\in K}$, and
$(\Ga_i)_{i\in L}$ be as in Proposition~\ref{RDivRel_prp}.
If $k\!\in\!K$ and the codimension of~$\fb_k$ is~1, 
then there exists a dense open subset~$\M_{k-1,l}^*$ of 
the target of the induced forgetful morphism
\BE{fk_e}
\ff^b_k\!:
\M^{\uo,\st}_{k,l}(\be;J)\!\!\fiber\!\!\big(\!(i,\fb_i)_{i\in K};(i,\Ga_i)_{i\in L}\big)
\lra\M^{\uo,\st}_{k-1,l}(\be;J)\!\!\fiber\!\!
\big(\!(i,\fb_i)_{i\in K-\{k\}};(i,\Ga_i)_{i\in L}\big)\EE
so that \eref{fk_e} restricts to a covering map over each connected component~$\M$
of~$\M_{k-1,l}^*$.
If in addition the codimensions of all~$\fb_i$ are odd and the codimensions of~$\Ga_i$ are even,
then the degree of this restriction is $-\lk(\prt\u,\prt\fb_k)$ for any $\u\!\in\!\M$. 
\end{lmm}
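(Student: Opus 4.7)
The strategy is to read the preimage under~\eref{fk_e} fiber-by-fiber: over a generic $\u'\!=\![u,(x_i)_{i<k},(z_i)_{i\in L}]$ in the target, a preimage is precisely a choice of $k$-th boundary marked point $x_k\!\in\!S^1$ with $u(x_k)\!\in\!\fb_k$, so the signed fiber count is the signed intersection of $\prt u$ with $\fb_k$ in $Y$, which by~\eref{lkdfn_e} equals $-\lk(\prt u,\prt\fb_k)$.

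First, I would define $\M_{k-1,l}^*$ to consist of those $\u'$ in the target of~\eref{fk_e} for which $u$ lies in the top-dimensional stratum of $\M_{k-1,l}^{\uo,\st}(\be;J)$ (so the domain is a single smooth disk), $u|_{S^1}$ is transverse to $\fb_k$ with image disjoint from $\prt\fb_k$ and from the marked values $u(x_i)$ for $i<k$, and the remaining fiber product conditions against the $\fb_i$ with $i\!\neq\!k$ and the $\Ga_i$ are transverse. For generic $J$, $\fb_i$, and $\Ga_i$, standard dimension counts show that $\M_{k-1,l}^*$ is open and dense in the target and that these transversalities persist along the full preimage of $\M_{k-1,l}^*$ in the source of~\eref{fk_e}.

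Second, over any $\u'\!\in\!\M_{k-1,l}^*$, the fiber of~\eref{fk_e} is a finite set naturally in bijection with $(u|_{S^1})\!\fiber\!\fb_k$, since adding the $k$-th marked point amounts to choosing $(x_k,q)\!\in\!S^1\!\times\!Z_{\fb_k}$ with $u(x_k)\!=\!\fb_k(q)$. The transversalities packaged into $\M_{k-1,l}^*$ then make~\eref{fk_e} restrict to a smooth proper covering on each connected component $\M$ of its preimage, and its degree is the oriented fiber count, equal up to an overall orientation correction to $|(u|_{S^1})\!\fiber\!\fb_k|^{\pm}$ for any $\u\!\in\!\M$.

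The main obstacle is to show this overall orientation correction is $-1$. The orientation of $\M_{k,l}^{\uo,\st}(\be;J)$ is constructed from that of $\cM_{k,l}^{\uo}$, and~\eref{cml_e} orients the forgetful fiber $T_{x_k}S^1$ with sign $(-1)^k$. Comparing the resulting orientation on the fiber of~\eref{fk_e} with the fiber product orientation on $(u|_{S^1})\!\fiber\!\fb_k$ requires commuting the $\fb_k$-factor past the remaining $\fb_i$ and $\Ga_i$ in the constrained fiber product and pulling the $S^1$-factor out of $\M_{k,l}^{\uo,\st}(\be;J)$, producing signs via Lemma~\ref{fibprodflip_lmm} and Lemma~\ref{fibprodisom_lmm1}. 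Under the parity hypotheses that each $\codim\,\fb_i$ is odd and each $\codim\,\Ga_i$ is even, these commutation signs combine with the $(-1)^k$ from~\eref{cml_e} and with the sign $(-1)^{(\dim\,Y)\cdot\codim\,\fb_k}\!=\!-1$ from the application of Lemma~\ref{fibprodisom_lmm1} that crosses $Y$ against $\fb_k$ to produce a net $-1$. Applying~\eref{lkdfn_e} with $\ga_1\!=\!\prt u$ and $\fb_2\!=\!\fb_k$ then yields $\deg(\ff^b_k|_{\M})\!=\!-\lk(\prt u,\prt\fb_k)$. This sign bookkeeping, rather than the geometric identification, is where essentially all the work lies.
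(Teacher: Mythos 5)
Your overall strategy matches the paper's: recognize that a preimage of $\u'$ under~\eref{fk_e} is a choice of $(x_k,q)\!\in\!S^1\!\times\!\dom\,\fb_k$ with $u(x_k)\!=\!\fb_k(q)$, so that over a suitable dense open subset~$\M^*_{k-1,l}$ the map restricts to a covering whose signed fiber count is the signed intersection of $\prt\u$ with $\fb_k$ in~$Y$, and then convert to a linking number via~\eref{lkdfn_e}. The paper implements this by first rewriting the left side of~\eref{fk_e} (via Lemma~\ref{fibprodisom_lmm1}) as $(-1)^{|K|-1}\big(\wt M_{\evb_k}\!\times_{\fb_k}\!(\dom\,\fb_k)\big)$ and then running two ladders of short exact sequences (Figure~\ref{Welcomplmm2_fig}, using Lemma~6.3 of~\cite{RealWDVV}) to show that the sign of $\nd\ff^b_k$ at $(\wt\u,q_k)$ equals $(-1)^{|K|-1}$ times the sign of $(x_k,q_k)$ in $(\prt\u)\!\fiber\!\fb_k$; the two $(-1)^{|K|-1}$'s cancel and give $\deg\big(\ff^b_k|_{\M}\big)=\big|(\prt\u)\!\fiber\!\fb_k\big|^{\pm}$.

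There is a concrete sign error in your proposal. You assert that the ``overall orientation correction'' relating the degree to $\big|(\prt u)\!\fiber\!\fb_k\big|^{\pm}$ is $-1$, and that it arises from $(-1)^k$ via~\eref{cml_e}, some unspecified commutation signs, and $(-1)^{(\dim Y)(\codim\,\fb_k)}\!=\!-1$ from Lemma~\ref{fibprodisom_lmm1}. But by~\eref{lkdfn_e} with $\ga_1\!=\!\prt u$ and $\fb_2\!=\!\fb_k$, one has $\lk(\prt u,\prt\fb_k)\!=\!-\big|(\prt u)\!\fiber\!\fb_k\big|^{\pm}$, so if the orientation correction were $-1$ you would get $\deg\!=\!-\big|(\prt u)\!\fiber\!\fb_k\big|^{\pm}\!=\!+\lk(\prt u,\prt\fb_k)$, contradicting the lemma and also your own stated conclusion. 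The correct orientation correction is $+1$; the minus sign in $-\lk(\prt u,\prt\fb_k)$ comes entirely from~\eref{lkdfn_e}. Moreover, the sign-by-sign accounting you gesture at is not actually carried out, and as written it does not assemble correctly: Lemma~\ref{fibprodisom_lmm1} applied here has middle space $Y^{K-\{k\}}\!\times\!X^L$, producing $(-1)^{|K|-1}$ rather than $(-1)^{\dim Y\cdot\codim\fb_k}$, and the contribution from the forgetful map is $(-1)^{k-1}$ (not $(-1)^k$) once one accounts for sliding $T_{x_k}S^1$ past $\ker D_{J;\u}$, which is odd-dimensional. You correctly identified that the sign bookkeeping is ``where essentially all the work lies'' --- but this is precisely the part left undone, and what is asserted does not hold up.
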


\begin{proof}
We denote the right-hand side of~\eref{fk_e} by~$M$ and define
$$K'=K-\{k\}, \qquad \wt{M}\equiv\M^{\uo,\st}_{k,l}(\be;J)\!\!\fiber\!\!
\big(\!(i,\fb_i)_{i\in K'};(i,\Ga_i)_{i\in L}\big).$$
By Lemma~\ref{fibprodisom_lmm1}, 
\BE{Welcomp2_e3}
\tn{LHS of \eref{fk_e}}=  
(-1)^{|K|-1}\big(\wt{M}_{\evb_k}\!\!\times_{\fb_k}\!\!(\dom\,\fb_k)\!\big). \EE
If the pseudocycles~$\fb_i$ with $i\!\in\!K$ have odd codimensions
and the pseudocycles~$\Ga_i$ have even codimensions, then
\BE{Welcomp2_e5}\dim\,\wt{M}_{\evb_k}\!\!\times_{\fb_k}\!\!(\dom\,\fb_k)
\cong k\!-\!|K| \qquad\hbox{mod}~2.\EE
Let $M'\!\subset\!M$ be the image  of the elements 
of the left-hand side of~\eref{fk_e} which meet the boundary 
of any of the pseudocycles~$\fb_i$ and~$\Ga_i$ or 
the pairwise intersection of any pair of these pseudocycles.
The dense open subset~$\M_{k-1,l}^*$ of~$M$ is 
the open subset of $M\!-\!M'$ consisting of the maps~$\u$ from~$\D^2$ 
with~$\prt\u$ transverse to~$\fb_k$.

\begin{figure}
\begin{small}
$$\xymatrix{& 0\ar[d]& 0\ar[d]& &\\
0\ar[r]& T_{x_k}S^1\ar[d]\ar[r]^{\Id}& T_{x_k}S^1\ar[d]\ar[r] & 0\ar[d] & \\
0\ar[r]& T_{\wt\u}\wt{M}\ar[d]\ar[r]&
T_{\wt\u}\big(\M^\uo_{k,l}(\be;J)\!\!\times\!\!\!\prod\limits_{i\in K'}\!\!(\dom\,\fb_i)
\!\!\times\!\!\!\prod\limits_{i\in L}\!(\dom\,\Ga_i)\!\big)
\ar[d]\ar[r]& 
\cN\De_{Y^{\!K'}\!\!\times\!X^{\!L}}\!\big|_y\ar[r]\ar[d]_{\Id}& 0\\
0\ar[r]& T_{\u}M\ar[d]\ar[r]& 
T_{\u}\big(\M^\uo_{k-1,l}(\be;J)\!\!\times\!\!\!\prod\limits_{i\in K'}\!\!(\dom\,\fb_i)
\!\!\times\!\!\!\prod\limits_{i\in L}\!(\dom\,\Ga_i)\!\big)\ar[d]\ar[r]& 
\cN\De_{Y^{\!K'}\!\!\times\!X^{\!L}}\!\big|_y\ar[r]\ar[d]&  0\\
& 0& 0&0 &}$$
$$\xymatrix{ && 0\ar[d]&& 0\ar[d]\\
& 0\ar[r]\ar[d] & T_{x_k}S^1\!\oplus\!T_{q_k}\!(\dom\,\fb_k)\ar[d]
\ar[rr]^>>>>>>>>>>>{\nd_{q_k}\!\fb_k-\nd_{x_k}\prt\u} && 
T_{\fb_k\!(q_k)}Y  \ar[d]_{\Id}\ar[r]& 0 & \\
0\ar[r]& T_{(\wt\u,q_k)}\!(\wt{M}_{\evb_k}\!\!\times_{\!\fb_k}\!\!(\dom\,\fb_k)\!) 
\ar[d]^{\Id}\ar[r]&
T_{\wt\u}\wt{M}\!\oplus\!T_{q_k}\!\dom(\fb_k) \ar[d]
\ar[rr]^>>>>>>>>>>>{\nd_{q_k}\!\fb_k-\nd_{\wt\u}\evb_k}&& 
T_{\fb_k\!(q_k)}Y \ar[r]\ar[d]& 0\\
0\ar[r] & T_{(\wt\u,q_k)}\!(\wt{M}_{\evb_k}\!\!\times_{\!\fb_k}\!\!(\dom\,\fb_k)\!)
\ar[r]^<<<<<<<<<<{\nd_{(\wt\u,q_k)}\ff^b_k}\ar[d]& T_\u M\ar[d]\ar[rr]&& 0\\
& 0 & 0&}$$
\end{small}
\caption{Commutative squares of exact sequences for the proof of Lemma~\ref{RDivRel_lmm}.}
\label{Welcomplmm2_fig}
\end{figure}

We compute the sign of~$\ff^b_k$ at a preimage~$(\wt\u,q_k)$  
of~$\u$ in the fiber product space in~\eref{Welcomp2_e3}
under~\eref{fk_e}.
Denote the $k$-th boundary marked point of~$\wt\u$ by~$x_k$
and the image of~$\u$ in $Y^{K'}\!\!\times\!X^I$ by~$y$.
All rows and the right column in the first diagram of Figure~\ref{Welcomplmm2_fig}
are orientation-compatible.
The short exact sequence 
$$0\lra T_{x_k}S^1\lra T_{\wt\u'}\M^\uo_{k,l}(\be;J) \lra
T_{\u'}\M^\uo_{k-1,l}(\be;J) \lra0,$$
where $\wt\u'$ and $\u'$ are the projections of $\wt\u$ and~$\u$, 
respectively, to the corresponding disk moduli spaces, has sign  $(-1)^{k-1}$.
Along with Lemma~6.3 in~\cite{RealWDVV}, this implies that 
the middle and left columns in the first diagram also have signs~$(-1)^{k-1}$. 
Thus, the middle column in the second diagram
has sign $(-1)^{k-1}$ as~well.
The middle row and the side columns in this diagram 
are orientation-compatible. 
The sign of the top row is the sign of $(x_k,q_k)$ in the fiber product $(\prt\u)\!\fiber\!\fb_k$.
Along with Lemma~6.3 in~\cite{RealWDVV} and~\eref{Welcomp2_e5}, this implies 
that the sign of the bottom row is~$(-1)^{|K|-1}$ times 
the sign of $(x_k,q_k)$ in the fiber product $(\prt\u)\!\fiber\!\fb_k$.

Combining the last conclusion with~\eref{Welcomp2_e3}, we obtain
$$\sum_{(\wt\u,q_k)\in\{\ff^b_k\}^{-1}(\u)}\hspace{-.32in}
\sgn\big(\nd_{(\wt\u,q_k)}\ff^b_k\big)
=\big|(\prt\u)\!\fiber\!\fb_k\big|^{\pm}\,.$$
Along with~\eref{lkdfn_e}, this establishes the degree claim.
\end{proof}

\begin{proof}[{\bf{\emph{Proof of Proposition~\ref{RDivRel_prp}}}}]
The first claim follows immediately from the first claim of Lemma~\ref{RDivRel_lmm}.
By Lemma~\ref{fibprodflip_lmm}, a reordering of the pseudocycles~$\fb_i$'s 
with $i\!=\!k',\ldots,k$
does not change the oriented space on the left-hand side of~\eref{fk2_e}.
We can thus assume~that 
$$K'=\big\{k\!-\!|K'|\!+\!1,\ldots,k\big\}\subset[k].$$
The second claim then follows from the second claim of Lemma~\ref{RDivRel_lmm} 
by induction.
\end{proof}

\vspace{.2in}

{\it Department of Mathematics, Stony Brook University, Stony Brook, NY 11794\\
xujia@math.stonybrook.edu}

\vspace{.2in}

\end{document}